\theoremstyle{plain}\newtheorem{Theorem}{Theorem}[section]
\theoremstyle{plain}
\theoremstyle{plain}\newtheorem{Corollary}[Theorem]{Corollary}
\theoremstyle{plain}\newtheorem{Lemma}[Theorem]{Lemma}
\theoremstyle{plain}\newtheorem{Proposition}[Theorem]{Proposition}
\theoremstyle{definition}
\theoremstyle{definition}
\theoremstyle{definition}
\theoremstyle{definition}
\theoremstyle{definition}
\theoremstyle{definition}
\theoremstyle{definition}
\theoremstyle{definition}
\theoremstyle{definition}
\theoremstyle{definition}
\theoremstyle{definition}
\theoremstyle{definition}
\theoremstyle{definition}\newtheorem{Notation/Definition}
[Theorem]{Notation/Definition}
\theoremstyle{definition}
\theoremstyle{plain}
 \def\Syl{\mathrm{Syl}}
\newcommand{\PSL}{\operatorname{PSL}}
\newcommand{\SL}{\operatorname{SL}}
\newcommand{\PGL}{\operatorname{PGL}}
\newcommand{\GL}{\operatorname{GL}}
\newcommand{\PSU}{\operatorname{PSU}}
\newcommand{\SU}{\operatorname{SU}}
\newcommand{\Out}{{\text{Out}}}
\begin{document}


\title[Principal blocks with tame representation type]{Splendid Morita equivalences for principal blocks with 
semidihedral defect groups}

\date{\today}
\author{Shigeo Koshitani, Caroline Lassueur and Benjamin Sambale}
\address{Center for Frontier Science,
Chiba University, 1-33 Yayoi-cho, Inage-ku, Chiba 263-8522, Japan.}
\email{koshitan@math.s.chiba-u.ac.jp}
\address{FB Mathematik, TU Kaiserslautern, Postfach 3049, 67653 Kaiserslautern, Germany.}
\email{lassueur@mathematik.uni-kl.de}
\address{Institut f{\"u}r Algebra, Zahlentheorie und Diskrete Mathematik, 
Leibniz Universit{\"a}t Hannover, Welfengarten 1, 30167 Hannover, Germany.}
\email{sambale@math.uni-hannover.de}

\thanks{The first author was partially supported by the Japan Society for
Promotion of Science (JSPS), Grant-in-Aid for Scientific Research
(C)19K03416, 2019--2021.
The second author acknowledges financial support by DFG SFB/TRR 195.
The third author is supported by the DFG grants \mbox{SA 2864/1-2} and \mbox{SA 2864/3-1}.}

\keywords{Splendid Morita equivalence, semidihedral 2-group, Scott module}

\subjclass[2010]{20C05, 20C20, 20C15, 20C33, 16D90}

\dedicatory{Dedicated to Gunter Malle on his 60th Birthday.}

\begin{abstract}
We classify principal blocks of finite groups with semidihedral defect groups up to splendid Morita equivalence.  This completes the classification of all principal $2$-blocks of tame representation type up to splendid Morita equivalence and  shows that Puig's Finiteness Conjecture holds for such blocks.
\end{abstract}

\maketitle


\section{Introduction}
The present article is motivated by Puig's Finiteness Conjecture (see  \cite[(38.6) Conjecture]{The95}), strengthening Donovan's Conjecture and  
predicting that for a given prime number~$\ell$ and a finite $\ell$-group $P$ there are only finitely many isomorphism classes of interior $P$-algebras arising as source algebras of $\ell$-blocks of finite groups with defect groups isomorphic to $P$, or equivalently that there are only a finite number of {\it splendid Morita equivalence}  classes of blocks of finite groups with defect groups isomorphic to~$P$. 
The cases where $P$ is either cyclic \cite{Lin96b} or a Klein-four group \cite{CEKL11} are the only cases for which this conjecture has been proved to hold in full generality. Else, under additional assumptions, Puig's Finiteness Conjecture has also been proved for several classes of finite groups, as for instance for $\ell$-soluble groups \cite{Pui94}, for the symmetric groups \cite{Pui94}, for the alternating groups and  the double covers thereof,  for Weyl groups, or for classical groups, see \cite{HK00, HK05} and the references therein.
\par
Our principal aim in this article is to classify principal $2$-blocks of finite groups with semidihedral defect groups up to splendid Morita equivalence and deduce that Puig's Finiteness Conjecture holds when letting the blocks vary through the class of all principal $2$-blocks of tame representation type.  We show that the knowledge of the equivalence classes of principal blocks with dihedral defect groups up to splendid Morita equivalences
is enough to describe  
the splendid Morita equivalence classes of  principal blocks with semidihedral defect groups, as well as the bimodules realizing these equivalences.
We recall that Erdmann \cite{Erd90} classified blocks of tame representation type up to Morita equivalence by describing their  basic algebras by generators and relations making intense use of the Auslander-Reiten quiver. However, given a splitting $2$-modular system $(K,\mathcal O, k)$, her results are not liftable to $\mathcal O$ in general and do not imply that the resulting Morita equivalences are necessarily splendid Morita equivalences.
By contrast, if Puig's Finiteness Conjecture holds over $k$, then it automatically holds over~$\mathcal O$, since the bimodules inducing splendid Morita equivalences are liftable from $k$ to $\mathcal O$. 
\par
To state our main results, we introduce the following notation. 
For $m\ge 2$ and a prime power $q=p^f$ let 
\begin{align*}
\SL^{\pm1}_m(q):=\{A\in\GL_m(q)\,|\,\det(A)=\pm1\},\\
\SU^{\pm1}_m(q):=\{A\in\mathrm{GU}_m(q)\,|\,\det(A)=\pm1\}.
\end{align*}
Now let $q=p^{2f}$ where $p$ is an odd prime. Then there are exactly three groups $H$ with 
\[\PSL_2(q) < H < \mathrm{P\Gamma L}_2(q)
 \text{ \quad and \quad} |H:\PSL_2(q)|=2. 
 \]
 One is $\PGL_2(q)$, one is contained in $\PSL_2(q)\rtimes\langle F\rangle$ where $F$ is the Frobenius automorphism on 
 $\mathbb F_q$, and the third one is denoted by 
$\PGL^*_2(q)$ (see \cite{G}).
Our main result is as follows:

\begin{Theorem}\label{MainResult}
Let $G$ be a finite group with a semidihedral
Sylow $2$-subgroup $P$ of order $2^n$ with $n\geq 4$ fixed and let $k$ be an algebraically closed field of characteristic~$2$.
Then the following assertions hold.
\begin{enumerate}
\item[\rm (a)] The principal block $B_0(kG)$ of $kG$  is splendidly Morita equivalent to the principal block of precisely one of the following groups:
\begin{itemize}
\item[\sf (bb)] $P$;
\item[\sf (ba1)] $\SL^{\pm}_2(p^f)$ where $4(p^f+1)_2=2^n$;
\item[\sf (ba2)] $\SU^{\pm}_2(p^f)$ where $4(p^f-1)_2=2^n$;
\item[\sf (ab)] $\PGL^*_2(p^{2f})$ where $2(p^{2f}-1)_2=2^n$;
\item[\sf (aa1)] $\PSL_3(p^f)$ where $4(p^f+1)_2=2^n$; or
\item[\sf (aa2)] $\PSU_3(p^f)$ where $4(p^f-1)_2=2^n$;
\end{itemize}
where $p$ is an odd prime number and $f\geq 1$.
Moreover, the splendid Morita equivalence is realized by the Scott module ${{\mathrm{Sc}}(G\times G', \Delta P)}$, where $G'$ is the group listed in the corresponding case.
\item[\rm(b)] In particular, if $G$ and $G'$ are two groups such that $|G|_2=|G'|_2$ and  which are both of type {\sf(ba1)}, both of type {\sf(ba2)},  both of type {\sf(ab)}, both of type {\sf(aa1)}, or both of type {\sf(aa2)},  then $B_0(kG)$ and $B_0(kG')$ are splendidly Morita equivalent.
\end{enumerate}
\end{Theorem}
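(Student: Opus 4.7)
The plan is to combine the classification of saturated fusion systems on a semidihedral $2$-group $P=\SD_{2^n}$ with a Scott-module criterion for splendid Morita equivalence, reducing to the dihedral case established in our earlier work.

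First, one recalls that for $n\geq 4$ the $\mathcal{F}$-essential subgroups of $P$ are limited to the maximal dihedral subgroup $D\cong D_{2^{n-1}}$, the maximal quaternion subgroup $Q\cong Q_{2^{n-1}}$, and the two $P$-conjugacy classes of Klein-four subgroups inside $D$. Exhausting the saturation-consistent choices of automizer at each such $E$ yields exactly six saturated fusion systems on $P$, in bijection with the types (bb), (ba1), (ba2), (ab), (aa1), (aa2). For each type, I would verify that the listed representative realises the corresponding fusion system: for (bb) the fusion is trivial, while for the others one uses standard Sylow-$2$ computations in $\SL^\pm_2(q)$, $\SU^\pm_2(q)$, $\PGL^*_2(q^2)$, $\PSL_3(q)$ and $\PSU_3(q)$, with the $2$-adic constraints ensuring that each representative has a Sylow $2$-subgroup of order $2^n$.

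Second, given $G$ and a listed representative $G'$ with matching fusion system on $P$, I would show that the Scott module $M:=\Sc(G\times G',\Delta P)$ induces a splendid Morita equivalence between $B_0(kG)$ and $B_0(kG')$. The argument proceeds via the standard three-step check: (i) Brauer indecomposability of $M$, which follows from the coincidence of fusion systems together with a standard criterion on $p$-permutation modules; (ii) identification of the Brauer construction $M(\Delta P)$ with an isomorphism between the principal blocks of $N_G(P)$ and $N_{G'}(P)$, which reduces via the index-$2$ dihedral subgroup $D<P$ and suitable index-$2$ subgroups $H\leq G$, $H'\leq G'$ to the splendid Morita equivalence of principal blocks with dihedral defect groups already established in our previous paper; and (iii) matching of simple-module counts, which follows from Erdmann's classification of tame blocks once the fusion systems coincide.

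Part (b) is then a consequence of (a) together with the observation that, within a fixed type and for fixed $|G|_2=2^n$, the splendid Morita equivalence class of the listed representative depends only on the fusion system and on $n$, not on the specific $p^f$ satisfying the $2$-adic constraint; this can be read off from the explicit source-algebra descriptions available for each of the five non-nilpotent families. The main obstacle will be step (ii) above: the reduction to the dihedral case demands the careful identification of compatible index-$2$ subgroups on both sides and the verification that the dihedral Scott-module equivalence lifts to the semidihedral setting with the Brauer correspondents in $N_G(P)$ and $N_{G'}(P)$ aligning correctly.
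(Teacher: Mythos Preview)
Your proposal has a genuine structural gap in step~(ii), and a missing ingredient overall.

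\textbf{The index-2 reduction fails.} You propose to reduce to the dihedral case via an index-$2$ subgroup $H\le G$ with dihedral Sylow $2$-subgroup $D$. But the groups of type {\sf(aa1)} and {\sf(aa2)}, namely $\PSL_3(p^f)$ and $\PSU_3(p^f)$, are simple and have no subgroup of index~$2$. The paper's reduction to the dihedral case runs along entirely different lines: for $\SL_2^{\pm}(q)$ and $\SU_2^{\pm}(q)$ one passes to the \emph{central quotient} by $Z(P)\cong C_2$, landing in $\PGL_2(q)$ with dihedral Sylow, and then lifts the dihedral Scott-module equivalence back up via a central-extension result. For $\PSL_3(q)$ and $\PSU_3(q)$ the gluing criterion requires checking $M(\Delta\langle z\rangle)$ at the centraliser $C_G(z)$ of the central involution---not at $N_G(P)$ as you write---and $C_G(z)/O_{2'}(C_G(z))$ turns out to be of type $\SL_2^{\pm}$ or $\SU_2^{\pm}$, feeding back into the previous case. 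Your proposed mechanism does not reach these groups.

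\textbf{The group classification is missing.} Classifying saturated fusion systems on $P$ and matching $G$ to a representative $G'$ with the same fusion system does not, by itself, let you run the Scott-module argument for arbitrary $G$. The paper first invokes the Alperin--Brauer--Gorenstein classification of groups with semidihedral Sylow $2$-subgroup to pin $G/O_{2'}(G)$ down to an explicit list (each representative possibly extended by an odd-order group), then uses an Alperin--Dade argument to strip off the odd extension, and only then appeals to the case-by-case equivalences between representatives. Without this, you would have to prove directly that the Scott module works for every $G$ with the given fusion system, which is considerably harder. You also do not mention $M_{11}$, which lies in none of the listed infinite families but has fusion type {\sf(aa)}; the paper treats it by a separate hands-on argument showing $B_0(kM_{11})$ is splendidly Morita equivalent to $B_0(k\PSL_3(3))$.

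Finally, step~(iii) is too weak: matching $\ell(B)$ does not upgrade a stable equivalence to a Morita equivalence. One must show that each simple $B$-module is sent to a simple $B'$-module, which the paper does via Green correspondence and Erdmann's explicit description of the correspondents. And your justification of part~(b) via ``explicit source-algebra descriptions available for each family'' is circular: producing such descriptions is essentially the content of the theorem.
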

In part (a) the labeling of the fusion patterns originates from \cite[p.231]{Ols75} (see also \cite[Theorem 5.3]{CG12}) and we emphasize that  $G'$ is not the derived subgroup $[G,G]$.
\medskip

Finally, as Craven-Eaton-Kessar-Linckelmann  proved in \cite{CEKL11} that Puig's Finiteness Conjecture holds for $2$-blocks with Klein-four defect groups and the first and the second authors proved in \cite{KL20a, KL20b} that it holds as well for principal $2$-blocks with dihedral and generalized quaternion defect groups, Theorem~\ref{MainResult} yields the following 
{\color{black}corollary}:

\begin{Corollary}
Puig's Finiteness Conjecture holds for principal $2$-blocks of tame representation type.
\end{Corollary}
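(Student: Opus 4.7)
My plan is to proceed by a case analysis on the isomorphism type of a Sylow $2$-subgroup $P$ of $G$, which is the defect group of the principal block $B_0(kG)$. The classical results of Bondarenko--Drozd together with Erdmann's classification (see \cite{Erd90}) show that a block algebra of a finite group over a field of characteristic $2$ has tame representation type if and only if its defect group is either a Klein four group, a dihedral $2$-group of order at least $8$, a semidihedral $2$-group (necessarily of order at least $16$), or a generalized quaternion $2$-group. To establish the corollary it therefore suffices to show, for each of these four possibilities for $P$, that the principal $2$-blocks of finite groups with Sylow $2$-subgroup isomorphic to $P$ fall into only finitely many splendid Morita equivalence classes; this is precisely the formulation of Puig's Finiteness Conjecture recalled in the introduction.

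I would then dispatch the four cases by invoking the existing classifications: the Klein four case is handled in \cite{CEKL11}; the dihedral case in \cite{KL20a}; the generalized quaternion case in \cite{KL20b}; and finally the semidihedral case by Theorem~\ref{MainResult}(a) above, which exhibits an explicit and finite list of representatives of the splendid Morita equivalence classes. Since in every case one obtains a finite list of representatives, Puig's Finiteness Conjecture holds for all principal $2$-blocks of tame representation type.

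There is essentially no obstacle to overcome in this assembly argument: all the technical work has already been carried out in the four references cited, the most recent of which is the main theorem of the present paper. The role of the corollary is simply to record that, with the semidihedral case now settled, the tame principal $2$-block case of Puig's Finiteness Conjecture is complete.
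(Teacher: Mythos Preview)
Your proposal is correct and follows exactly the approach the paper takes: the corollary is an assembly of the Klein four case \cite{CEKL11}, the dihedral case \cite{KL20a}, the generalized quaternion case \cite{KL20b}, and the semidihedral case given by Theorem~\ref{MainResult}(a). Your additional remark invoking the Bondarenko--Drozd/Erdmann characterization of tame $2$-blocks makes explicit what the paper leaves implicit, but otherwise the arguments coincide.
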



\section{Notation}

Throughout this paper, unless otherwise stated we adopt the following notation and 
conventions.  
All groups considered  are 
finite and all modules are finitely generated right modules.
In particular $G$ always denotes a finite group.  
We denote the dihedral group of order $2^m$ with $m\geq 2$
by $D_{2^m}$, the generalized quaternion group of order $2^m$ with
$m\geq 3$ by $Q_{2^m}$, and the cyclic group of order $m\geq 1$ by $C_m$.  
We denote by $SD_{2^n}$ 
the semidihedral group of order $2^n$ where $n \geq 4$ is fixed throughout this paper.
We denote by $\mathcal F_P(G)$ the fusion system of $G$ on
a Sylow $p$-subgroup $P$ of $G$ and 
by ${\mathrm{Syl}}_p(G)$ the set of all
Sylow $p$-subgroups of $G$.
We write $\Delta G:=\{(g, g)\,|\, g\in G\} \leq G\times G$.
Given two subgroups $N \vartriangleleft G$ and $L\leq G$ with $G=NL$ and $N\cap L=1$, $N\rtimes L$ denotes the semi-direct product of $N$ by $L$. 
\par
We let $k$ be an algebraically closed field of characteristic~$2$. We write $B_0(kG)$ for the principal block of the group algebra $kG$. For a block $B$ of $kG$, we write  $1_B$ for the block idempotent of $B$, $C_B$ for the Cartan matrix of $B$, and $k(B)$ and $\ell(B)$, respectively, are the numbers of irreducible
ordinary and Brauer characters of $G$ belonging to $B$. {\color{black} We denote by ${\mathrm{mod}}\text{-}B$ the category of finitely generated right $B$-modules and by $\underline{\mathrm{mod}}\text{-}B$ the associated stable module category.} 
\par
We denote by 
$k_G$ the trivial $kG$-module. {\color{black} Given a $kG$-module $M$ and a $2$-subgroup $Q\leq G$ we denote by $M(Q)$ the Brauer construction of $M$ with respect to $Q$.}
For $H\leq G$ we denote by ${\mathrm{Sc}}(G,H)$ the Scott $kG$-module with respect to~$H$.
By definition ${\mathrm{Sc}}(G,H)$ is, up to isomorphism, the unique indecomposable direct summand
of the induced module ${k_H}{\uparrow}^G$ which contains $k_G$ in its
head (or equivalently in its socle) and is a $2$-permutation module by definition.  See
\cite[Chap.4 \S 8.4]{NT89}. 
Equivalently, ${\mathrm{Sc}}(G,H)$ is the relative $H$-projective cover of $k_G$ (see \cite[Proposition 3.1]{The85}). 
\par
If $G$ and $H$ are finite groups,  $A$ and $B$ are blocks of $kG$ and $kH$ respectively and $M$ is an 
$(A,B)$-bimodule  inducing a Morita equivalence between $A$ and $B$, then we view $M$ as a right $k[G\times H]$-module via the right $G\times H$-action defined by  $m\cdot (g,h):=g^{-1}mh$ for every $m\in M, g\in G, h\in H$.
Furthermore, the algebras $A$ and $B$ are called 
\emph{splendidly Morita equivalent} (or \emph{source-algebra equivalent}, or  \emph{Puig equivalent}), if  
there is a Morita equivalence between $A$ and $B$ induced by an 
$(A,B)$-bimodule $M$ such that $M$, viewed  as a right $k[G\times H]$-module,
is a $2$-permutation module. 
In this case, due to a result of Puig (see \cite[Corollary~7.4]{Pui99} and 
\cite[Proposition~9.7.1]{Lin18}), the defect groups $P$ and $Q$ of $A$ and
$B$ respectively are isomorphic. Hence from now on we identify $P$ 
and $Q$.
Obviously $M$ is indecomposable as a $k(G\times H)$-module and since $M$ induces a Morita equivalence, $_AM$ and $M_B$ are both projective and therefore $\Delta P \leq G\times H$ is a vertex of $M$.
By a result of Puig and Scott, this definition  is equivalent to the condition that $A$ and $B$ have source algebras which are isomorphic as interior $P$-algebras (see \cite[Theorem~4.1]{Lin01} and \cite[Remark 7.5]{Pui99}).
\par
In this paper, in order to produce splendid Morita equivalences between principal
blocks of two finite groups $G$ and $G'$ with a common defect group $P$, we will use $2$-permutation modules  given by Scott modules of the form 
${\mathrm{Sc}}(G\times G', \, \Delta P)$, which are obviously $(B_0(kG),B_0(kG'))$-bimodules.   
Furthermore, we shall rely on the classification of  principal $2$-blocks 
of finite groups with dihedral  Sylow $2$-subgroups, up to splendid 
Morita equivalence, obtained  in \cite{KL20a}, where the result for Klein-four groups is 
in \cite{CEKL11}.
We will use the results of \cite{CEKL11,KL20a,KL20b} without further introduction in this text and refer the reader directly to the relevant material in these articles.

\section{Finite groups with semidihedral Sylow 2-subgroups}

One of the starting points of this project is the following very useful observation
due to the third author:

\begin{Theorem}[\cite{ABG70}]\label{thm:semidihderal}
Let $G$ be a finite group with a semidihedral
Sylow $2$-subgroup $P$ of order $2^n$ with $n\geq 4$,
and assume that $O_{2'}(G)=1$. 
Then one of the following holds:
\begin{enumerate}
\item[\rm (bb)] \ $G=P$.
\item[\rm (ba1)] \ $G={\mathrm{SL}}^{\pm}_2(p^f)\rtimes C_d$ 
where $4(p^f+1)_2=2^n$ and $d$ is an odd divisor of $f$.
\item[\rm(ba2)] \ $G={\mathrm{SU}}^{\pm}_2(p^f)\rtimes C_d$ 
where $4(p^f-1)_2=2^n$ and $d$ is an odd divisor of $f$.
\item[\rm (ab)] \ $G={\mathrm{PGL_2^*}}(p^{2f})\rtimes C_d$
where $2(p^{2f}-1)_2=2^n$ and $d$ is an odd divisor of $f$.
\item[\rm (aa1)] \ $G={\mathrm{PSL}}_3(p^f).H$ 
where $4(p^f+1)_2=2^n$ and $H \leq C_{(3, p^f-1)}\times C_d$
for an odd divisor $d$ of $f$.
\item[\rm (aa2)] \ $G={\mathrm{PSU}}_3(p^f).H$ where 
$4(p^f-1)_2=2^n$ and $H \leq C_{(3,p^f+1)}\times C_d$
for an odd divisor $d$ of $f$.
\item[\rm (aa3)] \ $G=M_{11}$.
\end{enumerate}
\end{Theorem}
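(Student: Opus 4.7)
The plan is to derive the statement directly from the Alperin-Brauer-Gorenstein classification \cite{ABG70} and then reorganize its conclusions into the seven explicit cases, matching them against the fusion-pattern labels of \cite{Ols75}. Under the hypothesis $O_{2'}(G)=1$ and $P\in\Syl_2(G)$ semidihedral of order $2^n\geq 16$, the main ABG trichotomy yields exactly three mutually exclusive situations: $G=P$ (immediately giving case (bb)); $F^*(G)$ is a non-abelian simple group with semidihedral Sylow $2$-subgroup of order $2^n$ and $G/F^*(G)$ has odd order (the ``simple'' cases (aa1)-(aa3)); or $G$ possesses a normal subgroup of index $2$ whose Sylow $2$-subgroup is generalized quaternion of order $2^{n-1}$ (the ``index-$2$'' cases (ba1), (ba2), (ab)).

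For the simple cases, I would recall from \cite{ABG70} that the non-abelian simple groups with semidihedral Sylow $2$-subgroup of order $\geq 16$ are exactly $\PSL_3(p^f)$ with $4(p^f+1)_2=2^n$, $\PSU_3(p^f)$ with $4(p^f-1)_2=2^n$, and $M_{11}$, and then identify $G/F^*(G)$ inside $\Out(F^*(G))$. Since $G/F^*(G)$ must have odd order (so as to preserve the semidihedral $2$-local structure), the odd subgroups of $\Out(\PSL_3(p^f))$ and $\Out(\PSU_3(p^f))$ are precisely subgroups of $C_{(3,p^f\mp 1)}\times C_d$ for some odd divisor $d$ of $f$, whereas $\Out(M_{11})=1$; this yields (aa1), (aa2) and (aa3). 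For the index-$2$ cases, I would fix a normal subgroup $N$ of $G$ of index $2$ with $\Syl_2(N)$ generalized quaternion, invoke the structure theorem for groups with generalized quaternion Sylow $2$-subgroup (also essentially contained in \cite{ABG70}) to restrict $N$ to a short list of possibilities, and match those extensions against $\SL^{\pm}_2(p^f)\rtimes C_d$ in case (ba1), $\SU^{\pm}_2(p^f)\rtimes C_d$ in case (ba2), and $\PGL^*_2(p^{2f})\rtimes C_d$ in case (ab); in each subcase the outer involution is realized as the element inverting the centre of the quaternion Sylow $2$-subgroup of $N$, and the $C_d$-factor records the field-automorphism part of $G/F^*(G)$.

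The main obstacle is the bookkeeping in the index-$2$ cases: $\mathrm{P\Gamma L}_2(p^{2f})$ contains three distinct subgroups of index $2$ over $\PSL_2(p^{2f})$, of which one has to single out $\PGL^*_2(p^{2f})$ as the unique one having semidihedral (rather than dihedral or wreathed) Sylow $2$-subgroup, and analogously in the $\SL_2/\SU_2$ subcases one has to determine which of the two $2$-part conditions $4(p^f\pm 1)_2=2^n$ applies. Matching these possibilities with the labels (bb), (ba1), (ba2), (ab), (aa1), (aa2), (aa3) originating from \cite[p.~231]{Ols75} (see also \cite[Theorem~5.3]{CG12}) requires careful tracking of centralizers and of the action of the outer involution on $P$, and is where the real content of the observation lies.
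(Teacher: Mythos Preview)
Your trichotomy is not quite the one in \cite{ABG70}, and this leads to a genuine error in your treatment of case~(ab). You assert that all three ``index-$2$'' cases (ba1), (ba2), (ab) arise from a normal subgroup $N\vartriangleleft G$ of index~$2$ whose Sylow $2$-subgroup is generalized quaternion. That is false for (ab): in $G=\PGL_2^*(p^{2f})$ the unique normal subgroup of index~$2$ is $\PSL_2(p^{2f})$, whose Sylow $2$-subgroups are \emph{dihedral} of order $2^{n-1}$, not generalized quaternion. The semidihedral group $SD_{2^n}$ has three maximal subgroups, isomorphic to $D_{2^{n-1}}$, $Q_{2^{n-1}}$ and $C_{2^{n-1}}$, and the ABG analysis splits the non-$2$-nilpotent, non-simple situation according to which of the first two occurs as $P\cap N$. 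In their terminology, $G$ is a $D$-group if $P\cap N$ is dihedral and a $Q$-group if $P\cap N$ is generalized quaternion; the paper's proof invokes \cite[Proposition~3.4]{ABG70} for the $D$-group case (yielding (ab) via Gorenstein--Walter) and \cite[Propositions~3.2--3.3]{ABG70} for the $Q$-group case (yielding (ba1), (ba2)). Your plan to ``invoke the structure theorem for groups with generalized quaternion Sylow $2$-subgroup'' therefore cannot produce $\PGL_2^*(p^{2f})$ at all, and the argument as written has a gap precisely at case~(ab).

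A smaller point: your remark that ``the outer involution is realized as the element inverting the centre of the quaternion Sylow $2$-subgroup of $N$'' does not make sense, since $Z(Q_{2^{n-1}})$ has order~$2$ and is centralized by every automorphism. Once you separate the $D$- and $Q$-group cases correctly and cite the relevant propositions of \cite{ABG70} for each, the remainder of your outline (the $QD$-group case with $O^{2'}(G)$ simple, and the identification of odd-order subgroups of $\Out(\PSL_3(p^f))$ and $\Out(\PSU_3(p^f))$) matches the paper's argument.
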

\begin{proof}
If $G$ is $2$-nilpotent, then Case (bb) holds since $O_{2'}(G)=1$. 
In all other cases, $G$ is a $D$-group, a $Q$-group or a $QD$-group with the notation of \cite[Definition~2.1]{ABG70}.
If $G$ is a $D$-group, then $G$ has a normal subgroup $K$ of index $2$ with a dihedral Sylow $2$-subgroup. Hence, the structure of $K$ (and $G$) follows essentially from the classification of Gorenstein--Walter. The precise information can be extracted from 
Proposition~3.4 of \cite{ABG70} and its proof. We see that Case (ab) holds. 
If $G$ is a $Q$-group, then Case (ba1) or (ba2) occurs by Propositions 3.2 and 3.3 (and its proof) of \cite{ABG70}. Finally, let $G$ be a $QD$-group. Then by \cite[Proposition 2.2]{ABG70}, $N:=O^{2'}(G)$ is simple and the possible isomorphism types of $N$ are given in the third main theorem of \cite{ABG70}, namely $M_{11}$, $\PSL_3(p^f)$ and $\PSU_3(p^f)$. Since $C_G(N)\cap N=Z(N)=1$ we have $C_G(N)\le O_{2'}(G)=1$. The possibilities for $G/N\le\Out(N)$ can be deduced from the Atlas~\cite{Atlas}. In particular, Case (aa1) holds if $N\cong M_{11}$. Now let $N$ be $\PSL_3(p^f)$ or $\PSU_3(p^f)$. Since, $G/N$ has odd order, it does not induce graph automorphisms on $N$. Hence, $G/N\le C_{(3,p^f-1)}\rtimes C_f$ or $G/N\le C_{(3,p^f+1)}\rtimes C_f$. Again, since $G/N$ has odd order, $G/N$ is abelian. 
\end{proof}

\section{The principal $2$-blocks of $M_{11}$ and  ${\mathrm{PSL}}_3(3)$}

Benson and Carlson \cite[(14.1)]{BC87} observed that
the principal $2$-blocks of the groups ${\mathrm{PSL}}_3(3)$ and $M_{11}$ 
are Morita equivalent by comparing their basic algebras.
In this section, we prove that  their result can be refined to a splendid Morita equivalence. 
More precisely, we prove that this Morita equivalence is induced by a Scott module 
using the gluing method developed by the first and the second author 
in \cite[Section~3 and Section 4]{KL20a}.

\begin{Lemma}\label{stableEquiv}
Set $G:={\mathrm{PSL}}_3(3)$, $G':=M_{11}$ 
and let $P\in{\mathrm{Syl}}_2(G)\cap{\mathrm{Syl}}_2(G')$,
so that $P\cong SD_{16}$.
Then
${\mathrm{Sc}}(G\times G',\, \Delta P)$ induces a stable equivalence of
Morita type between $B_0(kG)$ and $B_0(kG')$.
\end{Lemma}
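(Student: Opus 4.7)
The plan is to apply the gluing method of \cite[Sections~3--4]{KL20a}, a block-theoretic refinement of Brou\'e's criterion: since $M := \mathrm{Sc}(G \times G',\, \Delta P)$ is a $2$-permutation $(B_0(kG), B_0(kG'))$-bimodule with vertex $\Delta P$, it induces a stable equivalence of Morita type between $B_0(kG)$ and $B_0(kG')$ as soon as, for every non-trivial subgroup $1 \neq Q \leq P$, the Brauer construction $M(\Delta Q)$---regarded as a bimodule via the centraliser $C_{G \times G'}(\Delta Q) = C_G(Q) \times C_{G'}(Q)$---induces a Morita equivalence between the principal blocks $B_0(kC_G(Q))$ and $B_0(kC_{G'}(Q))$. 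A preliminary observation is that $\mathcal{F}_P(G)$ and $\mathcal{F}_P(G')$ coincide, since both groups realise the saturated semidihedral fusion system of type (aa) in the Olsson labelling \cite{Ols75}, characterised by the presence of one $\mathcal{F}$-essential subgroup of type $D_8$ and one of type $Q_8$. In particular, the $\mathcal{F}$-conjugacy classes of subgroups and the associated local subgroup structures match on both sides.

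I would then run through a system of representatives $Q$ of the $P$-conjugacy classes of non-trivial subgroups of $P \cong SD_{16}$---namely the two classes of non-central involutions, the cyclic subgroups of orders $4$ and $8$, the essential subgroups of type $D_8$ and $Q_8$, and $P$ itself. For each such $Q$, I would read off $N_G(Q), C_G(Q), N_{G'}(Q), C_{G'}(Q)$ from the well-known subgroup structures of $\PSL_3(3)$ and $M_{11}$, and invoke the Brauer formula for Scott modules used throughout \cite{KL20a} to identify
\[
M(\Delta Q) \;\cong\; \mathrm{Sc}\bigl(N_G(Q) \times N_{G'}(Q),\, \Delta N_P(Q)\bigr)
\]
as $(kN_G(Q), kN_{G'}(Q))$-bimodules. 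When $|Q| \geq 4$, the principal blocks $B_0(kC_G(Q))$ and $B_0(kC_{G'}(Q))$ have a proper defect subgroup of $P$---cyclic, Klein-four, or dihedral---and the required local Morita equivalence then falls within the scope of the splendid Morita classifications established in \cite{CEKL11, KL20a, KL20b}.

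The main obstacle is the case $Q = Z(P) = \langle z \rangle$, where no reduction of the defect group occurs: in both $G$ and $G'$ the centraliser of the central involution is isomorphic to $\GL_2(3)$, whose Sylow $2$-subgroup is again semidihedral of order $16$. I would handle this by fixing an isomorphism $C_G(z) \xrightarrow{\sim} C_{G'}(z) \cong \GL_2(3)$ that identifies the two chosen copies of $P$, and then observing that under this identification the local Scott bimodule, once projected onto principal blocks, is isomorphic to $B_0(k\GL_2(3))$ as a $(B_0(k\GL_2(3)), B_0(k\GL_2(3)))$-bimodule. This is a standard consequence of the fact that for any finite group $H$ with Sylow $p$-subgroup $P$, the principal block $B_0(kH)$ is a $2$-permutation module with vertex $\Delta P$ containing the trivial bimodule in its head, hence coincides with $\mathrm{Sc}(H \times H, \Delta P) \cdot 1_{B_0(kH)}$ and induces the identity self-Morita equivalence. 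Assembling these local Morita equivalences through the gluing machinery of \cite{KL20a} then yields the stable equivalence of Morita type asserted.
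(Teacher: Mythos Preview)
Your overall strategy coincides with the paper's---apply the gluing method of \cite{KL20a} to the Scott bimodule, and for the crucial local piece at $Z(P)$ exploit $C_G(z)\cong C_{G'}(z)\cong\GL_2(3)$ so that the local Scott bimodule is just $k\GL_2(3)$ as a bimodule over itself. That part is right and matches the paper exactly, including the observation that $\mathcal F_P(G)=\mathcal F_P(G')$.

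However, you are making the job harder than necessary and, as stated, your argument has a gap. The gluing criterion you invoke (\cite[Lemma~4.1]{KL20a}) does \emph{not} require checking $M(\Delta Q)$ for every non-trivial $Q\leq P$; it only requires this for subgroups of order~$2$. Since the fusion pattern here is of type {\sf(aa)}, all involutions of $P$ are $G$-conjugate (and $G'$-conjugate), so a single check at $Q=Z(P)=\langle z\rangle$ suffices; the paper then transports this to the non-central involutions via \cite[proof of Case~1 of Proposition~4.6]{KL20a}. Your case split, by contrast, treats only $Q=Z(P)$ and ``$|Q|\geq 4$'', so the subgroups generated by non-central involutions fall through the cracks---they have order~$2$ and are not $Z(P)$. (Incidentally, your enumeration of $P$-classes is also off: there is only one $P$-class of non-central involutions in $SD_{16}$, and you omit the Klein-four subgroups; but all of this becomes irrelevant once you use the correct form of the criterion.)

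One further point: to pass from ``${\mathrm{Sc}}(C_G(z)\times C_{G'}(z),\Delta P)\,\big|\,M(\Delta Z)$'' (which is \cite[Lemma~3.2]{KL20a}) to equality, the paper invokes the Brauer indecomposability of $M$ established in \cite{KT19}. Your ``Brauer formula for Scott modules'' sweeps this under the rug; it should be made explicit.
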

\begin{proof} 
Set $P:=\langle s, t \,|\, s^8=t^2=1, \, tst =s^{3} \,\rangle\cong SD_{16}$,  
$z:= s^4$ and $Z:=\langle z\rangle=Z(P)\cong C_2$ and observe that $\mathcal F_P(G)=\mathcal F_P(G')$ by \cite[Theorem 5.3]{CG12}.
We read from the Atlas \cite[p.13 and p.18]{Atlas} that $C_G(z)\cong  {\mathrm{GL}}_2(3)\cong C_{G'}(z)$.
Note that $k{\mathrm{GL}}_2(3)$ has only one $2$-block,
namely the principal block since $O_{2'}({\mathrm{GL}}_2(3))=1$.
Thus, 
${\mathrm{Sc}}(C_G(z)\times C_{G'}(z), \Delta P)$
realizes a (splendid) Morita equivalence between
$B_0(k\,C_G(z))$ and $B_0(k\, C_{G'}(z))$ {\color{black}because ${\mathrm{Sc}}(C_G(z)\times C_{G'}(z), \Delta P)= kC_G(z)$ seen as $(kC_G(z),kC_{G'}(z))$-bimodule.}
On the other hand,
${\mathrm{Sc}}(C_G(z)\times C_{G'}(z), \Delta P)\,|\,M(\Delta Z)$ by \cite[Lemma 3.2]{KL20a}.
Hence{\color{black}, as  ${\mathrm{Sc}}(C_G(z)\times C_{G'}(z), \Delta P)$ is Brauer indecomposable by \cite[Theorem~1.2]{KT19}}, we have in fact 
${\mathrm{Sc}}(C_G(z)\times C_{G'}(z), \Delta P)\cong M(\Delta Z)$. 
Thus, $M(\Delta Z)$ induces a Morita
equivalence between $B_0(k\,C_G(z))$ and $B_0(k\,C_{G'}(z))$. 
Therefore, as all involutions in $G$ 
are $G$-conjugate and $\mathcal F_P(G)=\mathcal F_P(G')$,  
\cite[proof of Case 1 of Proposition 4.6]{KL20a} yields that  for every involution $t\in P$
$${\mathrm{Sc}}( C_G(t) \times C_{G'}(t),\Delta P) = M(\Delta \langle t\rangle)$$
and induces a Morita equivalence between  $B_0(kC_{G}(t))$
and $B_0(kC_{G'}(t))$. Therefore the assertion follows from
\cite[Lemma 4.1]{KL20a}. 
\end{proof}

\begin{Proposition}\label{M11-L3(3)}
With the notation of Lemma~\ref{stableEquiv}, ${\mathrm{Sc}}(G\times G',\,\Delta P)$ induces a splendid Morita equivalence between $B_0(kG)$ and $B_0(kG')$.
\end{Proposition}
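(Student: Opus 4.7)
The plan is to start from the stable equivalence of Morita type provided by Lemma~\ref{stableEquiv}, induced by $M := {\mathrm{Sc}}(G\times G', \Delta P)$, and upgrade it to a Morita equivalence using Linckelmann's criterion (see e.g.\ \cite{Lin18}): a stable equivalence of Morita type between two indecomposable non-simple symmetric $k$-algebras is induced by a Morita equivalence if and only if its associated functor sends simple modules to simple modules, up to projective summands.

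Let $F := M\otimes_{B_0(kG')}-$ be the functor associated to $M$. First I would record the basic input data: by direct inspection of the ordinary and Brauer character tables of ${\mathrm{PSL}}_3(3)$ and $M_{11}$ in characteristic $2$ (or alternatively from Erdmann's classification together with the fact that $\mathcal F_P(G)=\mathcal F_P(G')$ by \cite[Theorem~5.3]{CG12}), both principal blocks satisfy $\ell(B_0(kG))=\ell(B_0(kG'))=3$. Next, since $M$ is a Scott module with vertex $\Delta P$, a standard property of Scott modules yields that $F$ sends the trivial module $k_{G'}$ to $k_G$ in $\underline{\mathrm{mod}}\text{-}B_0(kG)$, handling the trivial simple.

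The main obstacle is then to verify that $F$ sends each of the two remaining simple $B_0(kG')$-modules to a simple $B_0(kG)$-module, up to projective summand. I would proceed as follows. Because $M$ is a $2$-permutation bimodule with vertex $\Delta P$, the functor $F$ preserves vertices and sources of indecomposable non-projective modules (up to conjugacy in $G\times G'$), so the image of each simple already has a constrained local structure. Combined with the fact established during the proof of Lemma~\ref{stableEquiv}, namely that $M(\Delta\langle t\rangle)$ induces a Morita equivalence between $B_0(k\,C_G(t))$ and $B_0(k\,C_{G'}(t))$ for every involution $t\in P$, one can pin down the Brauer character of $F(V)$ for each non-trivial simple $V$ by local-global considerations. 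Matching these Brauer characters against the known $2$-Brauer characters of $B_0(kG)$ (from the Modular Atlas, or from the explicit decomposition matrix of ${\mathrm{PSL}}_3(3)$ at the prime~$2$) will identify $F(V)$ as a simple $B_0(kG)$-module up to projective summand, whereupon Linckelmann's criterion yields the desired Morita equivalence, which is by construction induced by $M$ and hence splendid.
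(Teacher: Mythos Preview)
Your overall framework is correct and matches the paper's: start from the stable equivalence of Lemma~\ref{stableEquiv}, show that the associated functor sends simples to simples, and invoke Linckelmann's criterion. The treatment of the trivial module is also the same. However, the crucial step---showing that the two non-trivial simples are sent to simples---is only gestured at. Saying that ``local--global considerations'' and the Morita equivalences $M(\Delta\langle t\rangle)$ allow one to ``pin down the Brauer character of $F(V)$'' is not a proof: the Brauer character is defined on $2$-regular elements, while the local data you have concerns Brauer constructions at $2$-subgroups, and you have not explained how to convert one into the other, nor how to rule out that $F(V)$ is an indecomposable non-simple module whose Brauer character happens to agree with that of a simple plus a projective.

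The paper takes a different and more concrete route. It first proves, by explicit computation with character tables and induction from subgroups, that \emph{all six} non-trivial simple modules in the two principal blocks are trivial source modules, and determines their vertices ($Q_8$ or $C_2\times C_2$). It then classifies the indecomposable trivial source modules in each block with vertex $Q\cong Q_8$ (respectively $K\cong C_2\times C_2$): in each case there are exactly two, namely the relevant Scott module and the simple. Since $F$ preserves the property of being a trivial source module and preserves the vertex (\cite[Lemma~3.4(b)]{KL20a}), $F({\sf 26})$ must be either ${\mathrm{Sc}}(G',Q)$ or ${\sf 10}$; a short adjunction argument using $F(k_G)=k_{G'}$ rules out the Scott module, forcing $F({\sf 26})={\sf 10}$, and similarly $F({\sf 12})={\sf 44}$. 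This is the missing idea in your sketch: rather than attempting a Brauer-character computation, one exploits that the simples are trivial source and that the trivial source modules with the relevant vertex are so few that a Hom-argument suffices.
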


\begin{proof}
Set $M:={\mathrm{Sc}}(G\times G',\,\Delta P)$, $B:=B_0(kG)$ and $B':=B_0(kG')$. The block $B$ has three simple $kG$-modules: $k_G$ and two modules ${\sf 12}$ and ${\sf 26}$ of $k$-dimension $12$ and $26$ respectively.
Similarly the block $B':=B_0(kG')$ has
three simple $kG'$-modules:  $k_{G'}$ and two modules ${\sf 10}$ and  ${\sf 44}$ of $k$-dimension $12$ and $26$ respectively. (See \cite{ModAtlas}).
\par
To start with, we claim that these six simple modules are all trivial source modules.
First, the trivial modules $k_G$ and $k_{G'}$ are obviously trivial source modules with vertex $P$, and  for $G':=M_{11}$, the module $\sf 10$ is a trivial source module with vertex $Q_8$ by  \cite[Lemma~2.1(a) and (d)]{SchM12}, whereas the module $\sf 44$ is a trivial source module with vertex $C_2\times C_2$ by  \cite[Lemma~2.2(a) and (c)]{SchM12}.
Next, consider $G:={\mathrm{PSL}}_3(3)$ and its maximal subgroup 
 $\mathfrak M:=3^2 \rtimes 2\mathfrak S_4 = 3^2 \rtimes {\mathrm{GL}}_2(3)$
where $\mathfrak S_4$ is the
symmetric group of degree $4$ (see \cite[p.13]{Atlas}).
Using the Atlas \cite[p.13]{Atlas} and the $2$-decomposition matrix
of $B$ given in \cite{ModAtlas} we easily compute that 
$k_{\mathfrak M}{\uparrow}^G=k_G + {\sf 12}$ as composition factors. 
Then, as $k_{G}$ and $\sf 12$ are self-dual, we must have  $k_{\mathfrak M}{\uparrow}^G=k_G \oplus {\sf 12}$. Hence {\sf 12} is a trivial source module.
Moreover, the module $\sf 12$ is liftable and affords the ordinary character $\chi_2$ (in the Atlas notation \cite[p.13]{Atlas}). Therefore, it follows from \cite[II Lemma 12.6(ii)]{Lan83} and  the character values of $\chi_2$ at $2$-elements that $\sf 12$ has vertex $C_2\times C_2$. 
To prove that $\sf 26$ is a trivial source module, we consider  $SD_{16}=P < {\mathrm{GL}}_2(3)=:\tilde G <\mathfrak M < G$.
We easily compute that $1_P{\uparrow}^{\tilde G}=1_{\tilde G}+\tilde\chi_{2a}$
where $\tilde\chi_{2a}$ is the unique $2$-rational irreducible ordinary character of $\tilde G$
of degree $2$. Hence, as above by self-duality, $k_P{\uparrow}^{\tilde G}=k_{\tilde G}\oplus {\tilde 2}$
where $\tilde 2$ is the unique simple $k\tilde G$-module in
$B_0(k \tilde G)$, so that the simple module $\tilde 2$ is a trivial source $k\tilde G$-module.
Again, we read from the the character table of $\tilde G$ and \cite[II Lemma 12.6(ii)]{Lan83} that $\tilde 2$ has vertex $Q_8$. 
Moreover, by the character tables of $G$ and $\tilde G$,  
we have $\tilde\chi_{2a}{\uparrow}^G = \chi_8$, so that ${\tilde 2}{\uparrow}^G = {\sf 26}$.
Hence {\sf 26} is also a trivial source $kG$-module with vertex $Q_8$.
\par
Next, we recall that there is a bijection between
the set of isomorphism classes of indecomposable  trivial source $kG'$-modules (resp. $kG$-modules) with
vertex $X\leq P$ and the set of 
isomorphism classes of indecomposable projective $k[N_{G'}(X)/X]$-modules (resp. $k[N_{G}(X)/X]$-modules).
(See \cite[Chap.4, Problem 10]{NT89}). 
Now consider $Q\leq P$ with $Q\cong Q_8$ and $K\leq P$ with $K\cong C_2\times C_2$.
It is easy to compute (e.g. using {\sf GAP}) that  $N_{G'}(P)/P=1$ 
and $N_{G'}(Q)/Q\cong N_{G'}(K)/K \cong \mathfrak S_3$ and it is well-known that $k \mathfrak S_3$ has two PIMs.
Hence there are  precisely two non-isomorphic indecomposable trivial source $kG'$-modules with vertex $Q$.
One of them is {\sf 10} by the above, and the other one has to be  ${\mathrm{Sc}}(G',Q)$, 
since ${\mathrm{Sc}}(G',Q)\ncong {\sf 10}$ as it must contain a copy of the trivial module in its socle. Namely, 
\begin{equation}\label{tsmM11VertexQ}
\{ \text{iso. classes of indec. trivial source $B'$-modules with vertex }Q \}=\{{\mathrm{Sc}}({G'},Q), {\sf 10}\}
\end{equation}
and similarly 
\begin{equation}\label{tsmM11VertexK}
\{\text{iso. classes of indec. trivial source $B'$-modules with vertex }K\}=\{{\mathrm{Sc}}({G'},K), {\sf 44}\}\,.
\end{equation}
For $G$ we also have $N_{G}(P)/P=1$ and  $N_{G}(Q)/Q \cong N_{G}(K)/K \cong \mathfrak S_3$ 
(e.g. using~{\sf GAP}).
Thus, the same arguments as above yield:
\begin{equation}\label{tsmSL3-3VertexQ}
\{ \text{iso. classes of indec. trivial source $B$-modules with vertex }Q \}=\{{\mathrm{Sc}}({G},Q), {\sf 26}\}
\end{equation}
and 
\begin{equation}\label{tsmSL3-3VertexK}
\{ \text{iso. classes of indec. trivial source $B$-modules with vertex }K \}=\{{\mathrm{Sc}}({G},K), {\sf 12}\}\,.
\end{equation}
Now, let us consider the functor
$$F: 
{\mathrm{mod}}\text{-}B \rightarrow
{\mathrm{mod}}\text{-}B',
\ \ \  X_B \mapsto (X\otimes_B M)_{B'}.
$$
By Lemma \ref{stableEquiv}, $F$ is a functor realizing {\color{black} a stable equivalence of Morita type, hence an} additive category equivalence between
$\underline{\mathrm{mod}}\text{-}B$ and
$\underline{\mathrm{mod}}\text{-}B'$.
Therefore, as $\mathcal F_P(G)=\mathcal F_P(G')$ (see \cite[Theorem 5.3]{CG12}), first  by \cite[Lemma~3.4(a)]{KL20a} we have 
$$F(k_G)=k_{G'}\,,$$
and by \cite[Theorem~2.1(a)]{KL20a}, 
$F({\sf 26})$ and $F({\sf 10})$ are both indecomposable
$kG'$-modules in $B'$. 
Next, we prove that $F({\sf 26})= {\sf 10}$.
 {\color{black} It follows from \cite[Lemma 3.4(b)]{KL20a}} that  
$$F({\sf 26}) \in \{ {\mathrm{Sc}}(G',Q), \ {\sf 10}  \}. $$
If $F({\sf 26})={\mathrm{Sc}}(G',Q)$, then
\begin{align*}
0\,{\not=}&\, {\mathrm{Hom}}_{kG'}(F({\sf 26}), k_{G'})
\,=\, {\mathrm{Hom}}_{kG'}( {\sf 26}\otimes_{kG}M, k_{G'})
\\
=\, &
\,{\mathrm{Hom}}_{kG}({\sf 26}, \,  k_{G'}\otimes_{kG'}M^*)
\ \  \text{ by adjointness}
\\
=\,&\,
{\mathrm{Hom}}_{kG}( {\sf 26}, k_{G}) \ \ \ 
\text{by \cite[Lemma 3.4(a)]{KL20a}}
\\
=\,&\, 0,
\end{align*}
a contradiction, so that we have  $F({\sf 26})={\sf 10}$. A similar argument using {\rm(2)} and {\rm(4)} yields $F({\sf 12})={\sf 44}$.
Therefore, by
\cite[Theorem 4.14.10]{Lin18}, $F$, namely $M$, induces a Morita equivalence between $B$ and $B'$ {\color{black}because all simple $B$-modules are mapped to simple $B'$-modules.} 
\end{proof}

\section{Proof of Theorem \ref{MainResult}{\rm(b)}} \label{sec:proofMainb}

First of all, we give a lemma which is a direct consequence of a well-known result due to Alperin and Dade, {\color{black} see \cite{Alp76} and \cite{Dad77},  restated in terms of splendid Morita equivalences in  \cite[Theorem 2.2]{KL20a}.}

\begin{Lemma}\label{lem:AlperinDade}
\color{black}Assume $k$ is an algebraically closed field of arbitrary prime characteristic~$\ell$. 
Let $G$ and $G'$ be finite groups with a
common Sylow $\ell$-subgroup 
$P\in{\mathrm{Syl}}_{\ell}(G)\cap{\mathrm{Syl}}_{\ell}(G')$.
Assume further that there are finite groups $\widetilde G$ and
$\tilde G'$ such that $\widetilde G \vartriangleright G$ and $\widetilde G' \vartriangleright G'$, $\widetilde G/G$ and $\widetilde G'/G'$ are $\ell'$-groups, 
and $\widetilde G=C_G(P)\,G$, $\widetilde G'=C_{G'}(P)\,G'$. 
If ${\mathrm{Sc}}(G\times G', \Delta P)$ realizes a 
(splendid) Morita equivalence between $B_0(kG)$ and $B_0(kG')$,
then
${\mathrm{Sc}}(\widetilde G\times \widetilde G', \Delta P)$
realizes a (splendid) Morita equivalence between
$B_0(k\widetilde G)$ and $\widetilde B_0(k\widetilde G')$.
\end{Lemma}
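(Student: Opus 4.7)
The plan is to derive the lemma as a three-step composition of splendid Morita equivalences, relying on the Alperin--Dade theorem as reformulated in \cite[Theorem~2.2]{KL20a}. First, I would apply that theorem separately to each of the pairs $(\widetilde G,G)$ and $(\widetilde G',G')$. Since $\widetilde G/G$ is an $\ell'$-group satisfying $\widetilde G=G\cdot C_{\widetilde G}(P)$, and likewise on the primed side, the hypotheses of \cite[Theorem~2.2]{KL20a} are met, so it yields splendid Morita equivalences
$$B_0(k\widetilde G)\simeq B_0(kG)\qquad\text{and}\qquad B_0(kG')\simeq B_0(k\widetilde G'),$$
realized respectively by the Scott modules ${\mathrm{Sc}}(\widetilde G\times G,\Delta P)$ and ${\mathrm{Sc}}(G'\times\widetilde G',\Delta P)$.

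Next, composing these two equivalences with the hypothesized splendid Morita equivalence $B_0(kG)\simeq B_0(kG')$ produces a splendid Morita equivalence between $B_0(k\widetilde G)$ and $B_0(k\widetilde G')$, induced by the tensor product bimodule
$$M:={\mathrm{Sc}}(\widetilde G\times G,\Delta P)\otimes_{kG}{\mathrm{Sc}}(G\times G',\Delta P)\otimes_{kG'}{\mathrm{Sc}}(G'\times\widetilde G',\Delta P).$$
Composition of splendid Morita equivalences being again splendid, $M$ is an indecomposable $2$-permutation $k[\widetilde G\times\widetilde G']$-module, and the standard vertex considerations for bimodules realizing a splendid Morita equivalence force its vertex to be $\Delta P$.

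The final step is to identify $M$ with ${\mathrm{Sc}}(\widetilde G\times\widetilde G',\Delta P)$. By the standard characterization of the Scott module as the unique indecomposable direct summand of $k_{\Delta P}\!\uparrow^{\widetilde G\times\widetilde G'}$ whose head contains the trivial module, this reduces to producing a non-zero $k[\widetilde G\times\widetilde G']$-homomorphism $M\twoheadrightarrow k_{\widetilde G\times\widetilde G'}$. This follows by iterated Frobenius reciprocity combined with \cite[Lemma~3.4(a)]{KL20a}, which guarantees that each of the three constituent Scott-realized splendid Morita equivalences between principal blocks sends the trivial module to the trivial module; chaining these three instances produces the required surjection. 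Hence $M\cong {\mathrm{Sc}}(\widetilde G\times\widetilde G',\Delta P)$, as asserted.

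The main obstacle is this final identification: verifying that the tensor product of three Scott modules is again a Scott module. However, given the trivial-module-in-head characterization above and the compatibility of splendid Morita equivalences between principal blocks with the trivial module, the argument is essentially formal, and the lemma follows directly. An alternative route would be to analyse $k_{\Delta P}\!\uparrow^{\widetilde G\times\widetilde G'}$ via Mackey's formula, exploiting $\widetilde G=G\,C_{\widetilde G}(P)$ to control the relevant double cosets, but the adjunction approach is considerably shorter.
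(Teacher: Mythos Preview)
Your proof is correct and follows essentially the same route as the paper: both apply \cite[Theorem~2.2]{KL20a} to the pairs $(\widetilde G,G)$ and $(\widetilde G',G')$, compose the three splendid Morita equivalences, and then identify the resulting indecomposable bimodule with ${\mathrm{Sc}}(\widetilde G\times\widetilde G',\Delta P)$. The only cosmetic difference lies in this last identification: the paper shows directly that $\mathfrak M \,\big|\, k_{\Delta P}{\uparrow}^{\widetilde G\times\widetilde G'}$ via the chain $\mathfrak M \,\big|\, k\widetilde G\otimes_{kP}k\widetilde G'$, whereas you reach the same conclusion by noting that $M$ is a trivial-source module with vertex $\Delta P$ and then use adjunction together with \cite[Lemma~3.4(a)]{KL20a} to exhibit the trivial module in its head.
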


\begin{proof}
Set $B:=B_0(kG)$, $\widetilde B:=B_0(k\widetilde G)$,
$B':=B_0(kG')$ and $\widetilde B':=B_0(k\widetilde G')$. 
By \cite[Theorem 2.2]{KL20a}, 
$\widetilde B$ and $B$
are splendidly Morita equivalent via
$1_{\widetilde B} k\widetilde G 1_B = 
{\mathrm{Sc}}(\widetilde G\times G, \Delta P)$,
and 
$B'$ and $\widetilde B'$
are splendidly Morita equivalent via
$1_{B' }k{\widetilde G'}\,1_{\widetilde B'} = 
{\mathrm{Sc}}(G' \times \widetilde G', \Delta P)$.
Furthermore ${\mathrm{Sc}}(G\times G',\Delta P)$
induces a splendid Morita equivalence between $B$ and $B'$ by assumption.
Hence composing these three splendid Morita equivalences, we have that
$$
{\mathrm{Sc}}(\widetilde G\times G, \Delta P)
\otimes_B {\mathrm{Sc}}(G\times G', \Delta P)
\otimes_{B'}{\mathrm{Sc}}(G' \times \widetilde G', \Delta P){\color{black} =:\mathfrak M}
$$
induces a splendid Morita equivalence between $\widetilde B$ and $\widetilde B'$.  
It  remains to see that 
$\mathfrak M={ {\mathrm{Sc}}(\widetilde G\times\widetilde G', \Delta P)}$.
Indeed, by the above
\begin{equation*}
\begin{split}
 \mathfrak M    
 \, =  \,  1_{\widetilde B} k\widetilde G 1_B
\otimes_B {\mathrm{Sc}}(G\times G', \Delta P)
\otimes_{B'}1_{B' }k{\widetilde G'}\,1_{\widetilde B'}    
& \, \Big| \, k\widetilde G\otimes_{kG}(kG\otimes_{kP}kG')
                                 \otimes_{kG'}k\widetilde{G'}    \\
        &  \,= \, k\widetilde G\otimes_{kP} k\widetilde{G'}
           = \ k_{\Delta P}{\uparrow}^{\widetilde G\times\widetilde{G'}}                            
\end{split}
\end{equation*}
and hence the definition of the Scott module yields ${\mathrm{Sc}}(\widetilde G\times \widetilde G', \Delta P)
\ \big| \,  \mathfrak{M}$. 
However, as $\mathfrak M$ induces a Morita equivalence between
$\widetilde B$ and $\widetilde B'$, which are both indecomposable
as bimodules, $_{\widetilde B}\mathfrak M_{\widetilde B'}$
must be indecomposable and it follows that $\mathfrak M
={\mathrm{Sc}}(\widetilde G\times\widetilde G', \Delta P)$.
\end{proof}

We now prove Theorem~\ref{MainResult}\rm(b) through a case-by-case analysis.
For the remainder of this section, we let $p,p'$ be prime numbers, $f,f'\geq 1$ and $n\geq 4$ be integers, and we use, without further mention, the fact that if a Scott module induces Morita equivalence between two blocks, then this equivalence is automatically splendid.

\begin{Proposition}\label{thm:SL2^pm}
Let  $G:={\mathrm{SL}}_2^\pm(p^f)$ and $G':={\mathrm{SL}}_2^\pm(p'^{f'})$ with $4(p^f+1)_2=4(p'^{f'}+1)_2=2^n$ 
and let $P\in{\mathrm{Syl}}_2(G)\cap{\mathrm{Syl}}_2(G')$.
Then, ${\mathrm{Sc}}(G\times G',\,\Delta P)$ induces a splendid 
Morita equivalence between $B_0(kG)$ and $B_0(kG')$.
\end{Proposition}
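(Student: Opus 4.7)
The strategy is to adapt the gluing method of \cite[Sections~3 and~4]{KL20a} that was used in Lemma~\ref{stableEquiv} and Proposition~\ref{M11-L3(3)}, now applied to the family (ba1). The starting point is that groups of type (ba1) with a common Sylow $2$-order $2^n$ share a unique fusion system on~$P$ by \cite[Theorem~5.3]{CG12}, so $\CF_P(G)=\CF_P(G')$. Next I would analyze the centralizers of the involutions of~$P$ in $G$ and in~$G'$. The central involution $z\in Z(P)$ coincides with $-I\in Z(G)\cap Z(G')$, so $C_G(z)=G$ and $C_{G'}(z)=G'$ (giving only a tautological local condition). For a non-central involution $t\in P$ (which necessarily has determinant~$-1$), a direct matrix calculation in $\SL_2^{\pm}(q)$ shows that $C_G(t)$ is essentially the normalizer of a split torus inside $\SL_2^{\pm}(q)$, hence an abelian extension whose Sylow $2$-subgroup equals $C_P(t)\cong C_2\times C_2$. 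Consequently $B_0(kC_G(t))$ is Morita equivalent to $k[C_2\times C_2]$, and the analogous statement holds for $C_{G'}(t)$.

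For the gluing step, the Brauer indecomposability of Scott modules \cite[Theorem~1.2]{KT19} combined with \cite[Proposition~4.6 Case~1]{KL20a} yields $\Sc(C_G(t)\times C_{G'}(t),\,\Delta P)=M(\Delta\langle t\rangle)$, and shows that this bimodule induces a splendid Morita equivalence between $B_0(kC_G(t))$ and $B_0(kC_{G'}(t))$ for every non-central involution $t\in P$. Applying the gluing lemma \cite[Lemma~4.1]{KL20a} --- the central involution imposing no extra condition since $C_G(z)=G$ and $C_{G'}(z)=G'$ --- I would conclude that $\Sc(G\times G',\,\Delta P)$ induces a stable equivalence of Morita type between $B_0(kG)$ and $B_0(kG')$.

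Finally, I would upgrade this stable equivalence to a splendid Morita equivalence by the same argument used to deduce Proposition~\ref{M11-L3(3)} from Lemma~\ref{stableEquiv}. Both principal blocks carry exactly three simple modules (semidihedral defect of order $\geq16$), and I would track each simple $B_0(kG)$-module through the functor $-\otimes_{B_0(kG)}\Sc(G\times G',\,\Delta P)$, using the bijection between indecomposable trivial source modules with vertex $Q\leq P$ and projective indecomposable $k[N_G(Q)/Q]$-modules, together with the Brauer construction and Frobenius reciprocity, to verify that simples are sent to simples; then \cite[Theorem~4.14.10]{Lin18} supplies the Morita equivalence, splendidness being automatic since the inducing bimodule is a Scott module. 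The main obstacle will be precisely this last step: for a generic prime power $q=p^f$, one has to identify all indecomposable trivial source $B_0(kG)$-modules with vertices $C_2\times C_2$ and $Q_8$, compute $N_G(Q)/Q$ for these vertices, and carry out a character-theoretic comparison with $G'$ in a $p$- and $f$-uniform manner --- ideally by exploiting the Deligne--Lusztig description of the ordinary characters of $\SL_2^{\pm}(q)$ rather than treating the cases one by one.
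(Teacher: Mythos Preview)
Your approach has a genuine gap at the gluing step. The central involution $z\in Z(P)$ equals $-I$ and is therefore central in $G$ and in $G'$, so $C_G(z)=G$ and $C_{G'}(z)=G'$. Because $\Delta Z$ then acts trivially on $M:=\Sc(G\times G',\Delta P)$, one has $M(\Delta Z)=M$, and the hypothesis of \cite[Lemma~4.1]{KL20a} at the involution $z$ reads: \emph{$M$ induces a Morita equivalence between $B_0(kG)$ and $B_0(kG')$}. This is not ``tautological'' --- it is strictly stronger than the stable equivalence the lemma would output, so the gluing method is circular here. The gluing argument in Lemma~\ref{stableEquiv} and in Propositions~\ref{thm:PSL3}, \ref{thm:PSU3}, \ref{thm:PGL2^*} works precisely because in those cases $z$ is \emph{not} central in the ambient group, so that $C_G(z)$ is a proper subgroup with a genuinely simpler principal block. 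There is also a factual slip in your last paragraph: for type {\sf(ba)} one has $\ell(B_0(kG))=2$, not $3$ (see \cite[p.~231]{Ols75}); three simples occur only in types {\sf(aa1)} and {\sf(aa2)}.

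The paper bypasses the obstruction by exploiting the central involution rather than fighting it: since $Z=\langle -I\rangle$ is central of order~$2$ in both $G$ and $G'$, one passes to $\bar G=G/Z\cong\PGL_2(p^f)$ and $\bar{G'}=G'/Z\cong\PGL_2(p'^{f'})$, whose common Sylow $2$-subgroup $\bar P=P/Z$ is dihedral of order $2^{n-1}$. By the already-established dihedral classification \cite[Theorem~1.1(6)]{KL20a}, $\Sc(\bar G\times\bar{G'},\Delta\bar P)$ induces a splendid Morita equivalence between $B_0(k\bar G)$ and $B_0(k\bar{G'})$, and \cite[Proposition~3.3(b)]{KL20b} lifts this through the central quotient to show that $\Sc(G\times G',\Delta P)$ induces the desired splendid Morita equivalence between $B_0(kG)$ and $B_0(kG')$. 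No analysis of non-central involutions, trivial source modules, or simple-to-simple tracking is needed.
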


\begin{proof}
The groups $G$ and $G'$ have a common central subgroup $Z\leq P$ of order $2$ such that
$\bar G:= G/Z \cong {\mathrm{PGL}}_2(p^f)$ and $\bar{G'}:= G'/Z \cong {\mathrm{PGL}}_2(p'^{f'})$ have a common Sylow $2$-subgroup $\bar P:=P/Z$ isomorphic to $D_{2^{n-1}}$
(see \cite[p.4]{ABG70}).  
Hence it follows from \cite[Theorem~1.1(6)]{KL20a} that ${\mathrm{Sc}}({\bar G}\times {\bar G'},\,\Delta {\bar P})$ induces a splendid 
Morita equivalence between $B_0(k\bar G)$ and $B_0(k\bar G')$. Therefore ${\mathrm{Sc}}(G\times G',\,\Delta P)$ induces a   
Morita equivalence between $B_0(kG)$ and $B_0(kG')$ \medskip by \cite[Proposition~3.3(b)]{KL20b}. The claim follows.
\end{proof}

\begin{Proposition}\label{thm:SU2^pm}
Let $G:={\mathrm{SU}}_2^\pm(p^f)$ and $G':={\mathrm{SU}}_2^\pm(p'^{f'})$ with $4(p^f-1)_2=4(p'^{f'}-1)_2=2^n$
and let $P\in{\mathrm{Syl}}_2(G)\cap{\mathrm{Syl}}_2(G')$.
Then, ${\mathrm{Sc}}(G\times G',\,\Delta P)$ induces 
a splendid Morita equivalence between $B_0(kG)$ and $B_0(kG')$.
\end{Proposition}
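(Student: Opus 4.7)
The proof will follow the same three-step template as Proposition~\ref{thm:SL2^pm}: reduce to a question about principal $2$-blocks with dihedral defect groups by quotienting out a common central involution, apply the classification of the latter from \cite{KL20a}, and then lift the resulting splendid Morita equivalence through the central extensions by means of \cite[Proposition~3.3(b)]{KL20b}.

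Concretely, I would first set $Z:=\{\pm I\}$; this is the centre of both $\SU_2(p^f)$ and $\SU_2(p'^{f'})$, and hence a common central subgroup of order $2$ contained in $P$. The hypothesis $4(p^f-1)_2=4(p'^{f'}-1)_2=2^n$ with $n\ge 4$ forces $p^f\equiv p'^{f'}\equiv 1\pmod{4}$, and standard facts about the unitary groups (or a direct appeal to \cite[p.4]{ABG70}) identify the quotients $\bar G:=G/Z\cong\PGL_2(p^f)$ and $\bar G':=G'/Z\cong\PGL_2(p'^{f'})$, sharing the Sylow $2$-subgroup $\bar P:=P/Z\cong D_{2^{n-1}}$.

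I would then invoke \cite[Theorem~1.1(6)]{KL20a} to conclude that the Scott module $\Sc(\bar G\times\bar G',\Delta\bar P)$ realises a splendid Morita equivalence between $B_0(k\bar G)$ and $B_0(k\bar G')$. Applying \cite[Proposition~3.3(b)]{KL20b} to the central extensions $1\to Z\to G\to\bar G\to 1$ and $1\to Z\to G'\to\bar G'\to 1$ then lifts this equivalence to a Morita equivalence between $B_0(kG)$ and $B_0(kG')$ induced by $\Sc(G\times G',\Delta P)$, which is automatically splendid because it is realised by a $2$-permutation module.

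The main point requiring verification is the identification $\bar G\cong\PGL_2(p^f)$. For $q\equiv 1\pmod{4}$ this follows by checking that $\SU_2^\pm(q)\cap Z(\GU_2(q))=\{\pm I\}=Z$, so that the composite $\SU_2^\pm(q)\hookrightarrow\GU_2(q)\twoheadrightarrow\PGU_2(q)$ has kernel exactly $Z$ and is an isomorphism by order count, combined with the classical isomorphism $\PGU_2(q)\cong\PGL_2(q)$. Once that identification is in place, the remainder of the argument is a direct transcription of the proof of Proposition~\ref{thm:SL2^pm}, and no further obstacle is expected.
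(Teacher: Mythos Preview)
Your approach is exactly the paper's: quotient by the common central involution, identify the quotients as $\PGL_2$ with dihedral Sylow $2$-subgroup, apply the dihedral classification from \cite{KL20a}, and lift via \cite[Proposition~3.3(b)]{KL20b}. The one slip is the citation: since here $4(p^f-1)_2=2^n$ forces $p^f\equiv 1\pmod 4$ (as you correctly note), the relevant case of the dihedral classification is \cite[Theorem~1.1(5)]{KL20a}, not~(6); item~(6) covers $\PGL_2(q)$ with $q\equiv 3\pmod 4$, which is the $\SL_2^\pm$ situation of Proposition~\ref{thm:SL2^pm}, and the two give genuinely different splendid Morita equivalence classes.
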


\begin{proof}
Again, the groups $G$ and $G'$ have a common central subgroup $Z\leq P$ of order $2$ such that
$\bar G:= G/Z \cong {\mathrm{PGL}}_2(p^f)$ and $\bar{G'}:= G'/Z \cong {\mathrm{PGL}}_2(p'^{f'})$ have a common Sylow $2$-subgroup $\bar P:=P/Z$ isomorphic to $D_{2^{n-1}}$
(see \cite[p.4]{ABG70}). Hence the assertion follows from the same argument as in the proof of Proposition~\ref{thm:SL2^pm}, where \cite[Theorem~1.1(6)]{KL20a} is replaced by \cite[Theorem~1.1(5)]{KL20a}.
\end{proof}

\begin{Proposition}\label{thm:PSL3}
$G:={\mathrm{PSL}}_3(p^f)$ and $G':={\mathrm{PSL}}_3(p'^{f'})$  with $4(p^f+1)_2=4(p'^{f'}+1)_2=2^n$ and let $P\in{\mathrm{Syl}}_2(G)\cap{\mathrm{Syl}}_2(G')$.
Then, ${\mathrm{Sc}}(G\times G',\,\Delta P)$ induces 
a splendid Morita equivalence between $B_0(kG)$ and $B_0(kG')$.
\end{Proposition}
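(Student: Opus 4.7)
The plan is to extend the strategy used for the pair $(G, G') = (\PSL_3(3), M_{11})$ in Lemma~\ref{stableEquiv} and Proposition~\ref{M11-L3(3)} to arbitrary admissible parameters. Fix $P \in \Syl_2(G) \cap \Syl_2(G')$ with $P \cong \SD_{2^n}$, let $z$ generate $Z := Z(P) \cong C_2$, and note that by \cite[Theorem~5.3]{CG12} one has $\CF_P(G) = \CF_P(G')$ and in the (aa1) fusion pattern all involutions of $P$ are $G$-conjugate. The centralizer $C_G(z)$ contains a full Sylow $2$-subgroup of $G$ and $C_G(z)/Z$ has dihedral Sylow $2$-subgroup $P/Z \cong D_{2^{n-1}}$; concretely, $C_G(z)$ is essentially $\GL_2(p^f)$ (up to the $(3,p^f-1)$-center of $\SL_3(p^f)$), and its quotient by $Z$ is of type $\PGL_2(p^f)$, and analogously for $G'$.

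The first step is to produce a splendid Morita equivalence between $B_0(kC_G(z))$ and $B_0(kC_{G'}(z))$ realized by $\Sc(C_G(z) \times C_{G'}(z),\, \Delta P)$. At the level of the central quotients, whose Sylow $2$-subgroups are dihedral of order $2^{n-1}$, the classification \cite[Theorem~1.1]{KL20a} supplies such an equivalence, and \cite[Proposition~3.3(b)]{KL20b} lifts it through the central $C_2 = Z$ (invoking Lemma~\ref{lem:AlperinDade} if it is first necessary to absorb a mismatch caused by outer automorphisms). By Brauer indecomposability \cite[Theorem~1.2]{KT19}, this Scott module is identified with $M(\Delta Z)$. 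Since all involutions of $P$ are $G$-conjugate, the argument of \cite[proof of Proposition~4.6, Case~1]{KL20a} then shows that for every involution $t \in P$, $\Sc(C_G(t) \times C_{G'}(t),\, \Delta P) \cong M(\Delta\langle t\rangle)$ induces a Morita equivalence between $B_0(kC_G(t))$ and $B_0(kC_{G'}(t))$. The gluing lemma \cite[Lemma~4.1]{KL20a} then implies that $\Sc(G \times G',\, \Delta P)$ induces a stable equivalence of Morita type between $B_0(kG)$ and $B_0(kG')$.

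The final step is to upgrade this stable equivalence to a Morita equivalence via \cite[Theorem~4.14.10]{Lin18}, that is, to show that the functor $F := - \otimes_{B_0(kG)} \Sc(G \times G',\, \Delta P)$ sends each simple $B_0(kG)$-module to a simple $B_0(kG')$-module. Since $\CF_P(G) = \CF_P(G')$, the parametrisation of indecomposable trivial source modules with a given vertex $X \leq P$ by the projective indecomposables of $k[N_G(X)/X]$ (\cite[Chap.~4, Problem~10]{NT89}) puts the trivial source modules of $B_0(kG)$ and $B_0(kG')$ into bijection vertex by vertex. Following the template of Proposition~\ref{M11-L3(3)}, one then identifies each of the three simple modules of $B_0(kG)$ as a trivial source module (the trivial module has vertex $P$; the two non-trivial simples have vertices $Q_8$ and $C_2 \times C_2$), and analogously for $G'$; the $\Hom(-, k_{G'})$ computation via \cite[Lemma~3.4(a)]{KL20a} exactly as in the proof of Proposition~\ref{M11-L3(3)} then distinguishes each $F(S)$ from the Scott module sharing its vertex, forcing $F(S)$ to be the corresponding non-trivial simple.

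The main obstacle is the trivial source identification of the two non-trivial simple modules of $B_0(k\PSL_3(p^f))$ for arbitrary admissible $p$ and $f$: one must show that they are liftable to ordinary characters and, via \cite[II Lemma~12.6(ii)]{Lan83} applied to the character values at $2$-elements, that their vertices are indeed $Q_8$ and $C_2 \times C_2$. This can in principle be extracted from the known $2$-decomposition matrix of $\PSL_3(q)$, but requires a delicate case analysis depending on the residue of $p^f$ modulo relevant powers of $2$. A practical workaround is to fix a small base group per fusion class (for example, $\PSL_3(3)$ in the case $n = 4$, already handled in Section~3) and prove splendid Morita equivalence of every $B_0(k\PSL_3(p^f))$ to that fixed base directly via the above template; transitivity of splendid Morita equivalences then delivers the full proposition.
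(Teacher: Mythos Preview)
Your stable-equivalence step is essentially the paper's approach: reduce to the centraliser of the central involution, establish the Morita equivalence there, invoke Brauer indecomposability and the gluing lemma \cite[Lemma~4.1]{KL20a}. One small structural difference: the paper identifies $C_G(z)/O_{2'}(C_G(z))\cong \SL_2^\pm(p^f)\rtimes C_d$ (still with semidihedral Sylow) via \cite[Proposition~4(iii)]{ABG70} and Theorem~\ref{thm:semidihderal}, and then cites its own Proposition~\ref{thm:SL2^pm} together with Lemma~\ref{lem:AlperinDade}, rather than passing directly to a $\PGL_2$-type quotient by $Z$. Your route through $C_G(z)/Z$ and the dihedral classification is essentially what Proposition~\ref{thm:SL2^pm} does internally, so the two agree; you just need to be careful that $C_G(z)/Z$ is only of $\PGL_2$-type after also killing the odd-order part.

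The upgrade from stable to Morita equivalence is where you take a genuinely harder route, and where your acknowledged obstacle is real. The paper does \emph{not} attempt to identify the non-trivial simple $B_0(kG)$-modules as trivial source modules with specified vertices for general $p^f$. Instead, since $\Aut(P)$ is a $2$-group one has $N_G(P)\le C_G(z)$, so Green correspondence $f=f_{(G,P,C_G(z))}$ makes sense, and the paper invokes Erdmann \cite[(3.4)]{Erd79}: after identifying $B_0(kC_G(z))$ with $B_0(kC_{G'}(z))$ via the Morita equivalence already obtained at the centraliser level, the Green correspondents $f(S)$ and $f'(S')$ of corresponding simple modules coincide. This immediately forces the stable functor to send simples to simples, and \cite[Theorem~4.14.10]{Lin18} finishes. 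This bypasses entirely the case analysis on liftability and character values at $2$-elements that you flag as the main obstacle; your template from Proposition~\ref{M11-L3(3)} would work in principle, but Erdmann's result is the intended shortcut and is what makes the argument uniform in $p^f$.
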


\begin{proof}
First, we claim that $M:={\mathrm{Sc}}(G\times G',\,\Delta P)$ induces a stable equivalence of Morita type between $B_0(kG)$ and $B_0(kG')$. 
Let $z$ be the unique  involution in ${Z:=Z(P)}$, and set $C:=C_G(z)$, $C':=C_{G'}(z)$, $\overline{C}:=C/O_{2'}(C)$, $\overline{C'}:=C'/O_{2'}(C')$
and $\overline{P}:= P O_{2'}(C)/O_{2'}(C) \cong P\cong
P O_{2'}(C')/O_{2'}(C')$ 
(and we identify the two groups).
Then, by \cite[Proposition 4(iii), p.21]{ABG70} and Theorem \ref{thm:semidihderal}, we obtain that 
\begin{align*} 
\overline{C}&\cong 
{\mathrm{SL}}^\pm_2(p^f) \rtimes C_d  
\ \ \ \ \text{ for an odd }d\text{ with } d\,|\,f
\text{ and }
\\
\overline{C'}&\cong 
{\mathrm{SL}}^\pm_2(p'^{f'}) \rtimes C_{d'}
\ \ \text{ for an odd }d' \text{ with } d' \,|\,f'.
\end{align*}
\noindent  We can consider that $B_0(kC)=B_0(k\overline{C})$,
$B_0(kC')=B_0(k\overline{C'})$ and
$\overline{P}
\in{\mathrm{Syl}}_2(\overline{C})\cap{\mathrm{Syl}}_2(\overline{C'})$. 
Hence it follows from Proposition~\ref{thm:SL2^pm}  and Lemma \ref{lem:AlperinDade} that 
${\mathrm{Sc}}(\overline{C}\times \overline{C'}, \Delta\overline{P})$ induces a Morita equivalence between 
$B_0(k\overline{C})$ and $B_0(k\overline{C'})$.  
Thus,
${M_Z}:={\mathrm{Sc}}(C\times C', \Delta P)$ induces a Morita equivalence between 
$B_0(kC)$ and $B_0(kC')$ {\color{black}by \cite[Proposition~3.3(b)]{KL20b}.}
On the other hand, $\mathcal F_P(G)= \mathcal F_P(G')$ by  \cite[Theorem 5.3]{CG12}.
Hence it follows from \cite[Lemma 3.2]{KL20a} that 
$M_Z\,|\,M(\Delta Z)$ and therefore by \cite[Theorem 1.2]{KT19},
$M_Z= M(\Delta Z)$.
Thus, again the gluing method of 
\cite[Lemma 4.1]{KL20a} implies the claim.
\par
Next we claim that the stable equivalence of Morita type between $B_0(kG)$ and $B_0(kG')$ 
induced by $M$ is actually a Morita equivalence.
Since ${\mathrm{Aut}}(P)$ is a $2$-group, $N_G(P)=P\times O_{2'}(C_G(P))$,
so that $N_G(P)\leq C$ and we can consider the Green correspondences 
$f:=f_{(G,P,C)}$ and $f':=f_{(G',P,C')}$. Then, it follows from \cite[(3.4)]{Erd79} that
we can consider
$$  B_0(kC)=B_0(kC') \text{ and } f(S)=f'(S')$$ 
for all three simple $kG$-modules
$S$ and $S'$ in $B_0(kG)$ and $B_0(kG')$, respectively, where $S$ corresponds to $S'$.
Thus,  
\cite[Theorem 4.14.10]{Lin18} yields that $M$ induces a Morita equivalence
between $B_0(kG)$ and \medskip $B_0(kG')$, which is automatically splendid.
\end{proof}

\begin{Proposition}\label{thm:PSU3}
Let $G:={\mathrm{PSU}}_3(p^f)$ and $G':={\mathrm{PSU}}_3(p'^{f'})$  with  $4(p^f-1)_2=4(p'^{f'}-1)_2=2^n$ 
and let $P\in{\mathrm{Syl}}_2(G)\cap{\mathrm{Syl}}_2(G')$.
Then, ${\mathrm{Sc}}(G\times G',\,\Delta P)$ induces a splendid 
Morita equivalence between $B_0(kG)$ and $B_0(kG')$.
\end{Proposition}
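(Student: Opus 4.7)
The proof should essentially mirror that of Proposition~\ref{thm:PSL3}, with $\PSL_3$ replaced by $\PSU_3$, the condition $4(p^f+1)_2=2^n$ replaced by $4(p^f-1)_2=2^n$, and Proposition~\ref{thm:SL2^pm} replaced by Proposition~\ref{thm:SU2^pm}. The plan has two main stages: first establish a stable equivalence of Morita type via a gluing argument, then upgrade it to a Morita equivalence via Green correspondence.

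For the first stage, I would set $M:={\mathrm{Sc}}(G\times G',\Delta P)$, let $z$ be the unique involution in $Z:=Z(P)$, and put $C:=C_G(z)$, $C':=C_{G'}(z)$, $\overline{C}:=C/O_{2'}(C)$, $\overline{C'}:=C'/O_{2'}(C')$, and $\overline{P}:=P O_{2'}(C)/O_{2'}(C)\cong P$ (identified with its analogue on the $C'$ side). Invoking \cite[Proposition 4(iii), p.21]{ABG70} together with Theorem~\ref{thm:semidihderal} applied to $\overline{C}$ and $\overline{C'}$, one obtains
\begin{align*}
\overline{C}&\cong {\mathrm{SU}}^{\pm}_2(p^f)\rtimes C_d &&\text{for some odd }d\mid f, \\
\overline{C'}&\cong {\mathrm{SU}}^{\pm}_2(p'^{f'})\rtimes C_{d'} &&\text{for some odd }d'\mid f'.
\end{align*}
Since $B_0(kC)=B_0(k\overline{C})$, $B_0(kC')=B_0(k\overline{C'})$, and $\overline{P}\in\mathrm{Syl}_2(\overline{C})\cap\mathrm{Syl}_2(\overline{C'})$, applying Proposition~\ref{thm:SU2^pm} together with Lemma~\ref{lem:AlperinDade} yields that ${\mathrm{Sc}}(\overline{C}\times\overline{C'},\Delta\overline{P})$ realizes a splendid Morita equivalence between $B_0(k\overline{C})$ and $B_0(k\overline{C'})$; hence by \cite[Proposition~3.3(b)]{KL20b}, $M_Z:={\mathrm{Sc}}(C\times C',\Delta P)$ realizes one between $B_0(kC)$ and $B_0(kC')$.

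Next, using $\mathcal F_P(G)=\mathcal F_P(G')$ from \cite[Theorem 5.3]{CG12}, \cite[Lemma 3.2]{KL20a} gives $M_Z\mid M(\Delta Z)$, and then the Brauer-indecomposability result \cite[Theorem~1.2]{KT19} forces $M_Z = M(\Delta Z)$. The gluing method \cite[Lemma 4.1]{KL20a} then shows that $M$ induces a stable equivalence of Morita type between $B_0(kG)$ and $B_0(kG')$.

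For the second stage, observe that $\Aut(P)$ is a $2$-group, so $N_G(P)=P\times O_{2'}(C_G(P))\leq C$ and similarly for $G'$, which legitimizes the Green correspondences $f:=f_{(G,P,C)}$ and $f':=f_{(G',P,C')}$. Invoking \cite[(3.4)]{Erd79} one has $B_0(kC)=B_0(kC')$ and $f(S)=f'(S')$ for the three simple $B_0(kG)$-modules $S$ and their matching partners $S'$ in $B_0(kG')$ under the stable equivalence, so that \cite[Theorem~4.14.10]{Lin18} upgrades the stable equivalence to a Morita equivalence, which is automatically splendid as it is induced by a Scott (hence $2$-permutation) module. The only step I expect to need genuine verification is confirming that the centralizer structure \cite[Proposition 4(iii), p.21]{ABG70} delivers precisely an $\SU^{\pm}_2$-factor (rather than $\SL^{\pm}_2$) under the hypothesis $4(p^f-1)_2=2^n$; everything else is a faithful transcription of the $\PSL_3$ argument.
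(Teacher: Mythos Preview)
Your proposal is correct and follows exactly the same route as the paper, with one small but necessary correction: in the second stage you invoke \cite[(3.4)]{Erd79}, but that result is specific to $\PSL_3$; for $\PSU_3$ the paper replaces it by \cite[(4.10)]{Erd79}, which is the corresponding statement identifying the Green correspondents of the simple $B_0(k\PSU_3(q))$-modules. With that reference adjusted, your argument matches the paper's proof verbatim.
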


\begin{proof}
The same arguments as in the proof of Proposition~\ref{thm:PSL3} yield the result. More precisely, in this case $\overline{C}\cong {\mathrm{SU}}^\pm_2(p^f)$ and $\overline{C'}\cong {\mathrm{SU}}^\pm_2(p'^{f'})$
so that it follows from Proposition~\ref{thm:SU2^pm} that the Scott module 
$\overline{M_Z}:={\mathrm{Sc}}(\overline{C}\times \overline{C'}, \Delta\overline{P})$ induces a Morita equivalence between 
$B_0(k\overline{C})$ and $B_0(k\overline{C'})$, and \cite[(3.4)]{Erd79} is replaced \medskip by~\cite[(4.10)]{Erd79}. 
\end{proof}

\noindent Finally we deal with the groups of type {\sf (ab)}, that is of the form ${\mathrm{PGL}}_2^*(p^{2f})$. This case requires more involved arguments. However, the proof of \cite[Proposition~5.4]{KL20a} -- showing that the principal blocks of ${\mathrm{PGL}}_2(q)$ and ${\mathrm{PGL}}_2(q')$ with a common dihedral Sylow $2$-subgroup and $q\equiv q'\equiv 1\pmod{4}$ are splendidly Morita equivalent  -- can be imitated because the vast majority of the arguments rely on the fact that ${\mathrm{PGL}}_2(q)$ is an extension of degree two of ${\mathrm{PSL}}_2(q)$.

\begin{Proposition}\label{thm:PGL2^*}
Let $G:={\mathrm{PGL}}_2^*(p^{2f})$, $G':={\mathrm{PGL}}^*_2({p'}^{2f'})$ with $2(p^{2f}-1)_2=2(p'^{2f'}-1)_2$
$=2^n$ and let $P\in{\mathrm{Syl}}_2(G)\cap{\mathrm{Syl}}_2(G')$. 
Then, 
${\mathrm{Sc}}(G\times G',\,\Delta P)$ induces a splendid 
Morita equivalence between $B_0(kG)$ and $B(kG')$.
\end{Proposition}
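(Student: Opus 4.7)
The plan is to mimic the proof of \cite[Proposition~5.4]{KL20a}, which establishes the analogous splendid Morita equivalence for principal blocks of $\mathrm{PGL}_2(q)$ and $\mathrm{PGL}_2(q')$ sharing a common dihedral Sylow $2$-subgroup. The crucial structural input is that $\mathrm{PGL}_2^*(p^{2f})$ contains $\mathrm{PSL}_2(p^{2f})$ as a normal subgroup of index~$2$, exactly paralleling the extension $\mathrm{PSL}_2(q)\lhd\mathrm{PGL}_2(q)$ for $q\equiv1\pmod4$. Setting $H:=\mathrm{PSL}_2(p^{2f})\lhd G$ and $H':=\mathrm{PSL}_2(p'^{2f'})\lhd G'$, the intersection $\bar P:=P\cap H$ is a common dihedral Sylow $2$-subgroup of $H$ and $H'$ of order $(p^{2f}-1)_2=2^{n-1}=(p'^{2f'}-1)_2$, and \cite[Theorem~1.1]{KL20a} supplies a splendid Morita equivalence between $B_0(kH)$ and $B_0(kH')$ realised by $\mathrm{Sc}(H\times H',\Delta\bar P)$. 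Because $[G:H]=2$ equals the characteristic, Lemma~\ref{lem:AlperinDade} cannot be invoked to lift this equivalence to $G$ and $G'$ directly.

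I would then produce a stable equivalence of Morita type between $B_0(kG)$ and $B_0(kG')$ using the same gluing strategy that appears in Propositions~\ref{thm:PSL3} and~\ref{thm:PSU3}. Letting $z$ be the central involution of $P$, set $C:=C_G(z)$, $C':=C_{G'}(z)$ and $Z:=\langle z\rangle$. The structure of $\bar C:=C/O_{2'}(C)$ and $\bar{C'}:=C'/O_{2'}(C')$ can be identified via Theorem~\ref{thm:semidihderal}, and combining the matching case of \cite[Theorem~1.1]{KL20a} with Lemma~\ref{lem:AlperinDade} should yield a Morita equivalence between $B_0(kC)$ and $B_0(kC')$ induced by $\mathrm{Sc}(C\times C',\Delta P)$. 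Using $\mathcal F_P(G)=\mathcal F_P(G')$ from \cite[Theorem~5.3]{CG12}, together with \cite[Lemma~3.2]{KL20a} and the Brauer indecomposability result \cite[Theorem~1.2]{KT19}, this Scott module coincides with $M(\Delta Z)$, and the gluing lemma \cite[Lemma~4.1]{KL20a} then delivers a stable equivalence of Morita type between $B_0(kG)$ and $B_0(kG')$ induced by $M:=\mathrm{Sc}(G\times G',\Delta P)$.

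The main obstacle is the last step: upgrading the stable equivalence to a genuine Morita equivalence. Following \cite[Proposition~5.4]{KL20a}, I would verify that $-\otimes_{B_0(kG)}M$ sends every simple $B_0(kG)$-module to a simple $B_0(kG')$-module, at which point \cite[Theorem~4.14.10]{Lin18} concludes the proof (the equivalence being automatically splendid since induced by a Scott module). To verify preservation of simplicity, I would combine the Green correspondence $f_{(G,P,C)}$ with Erdmann's description \cite{Erd79} of the simples of principal blocks of tame representation type, and invoke Clifford theory for the degree-two extension $H\lhd G$ to relate the simple $B_0(kG)$-modules to those of $B_0(kH)$, comparing with the analogous data for $H'\lhd G'$ via the splendid Morita equivalence of the first paragraph. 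The delicate aspect, precisely why the argument must mirror \cite[Proposition~5.4]{KL20a} rather than the shorter proofs of Propositions~\ref{thm:PSL3}--\ref{thm:PSU3}, is that one must carefully track the outer diagonal/field automorphism encoded in the \emph{star} of $\mathrm{PGL}_2^*$ throughout this dictionary between simples on the $G$-side and the $H$-side.
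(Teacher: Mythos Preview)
Your overall strategy matches the paper's: first establish a stable equivalence of Morita type via the gluing method of \cite[Lemma~4.1]{KL20a}, then upgrade to a Morita equivalence by checking that simples go to simples and invoking \cite[Theorem~4.14.10]{Lin18}, with the latter step carried out by transplanting the proof of \cite[Proposition~5.4]{KL20a} (and the $q\equiv 1\pmod 4$ case of \cite[Lemma~5.3]{KL20a}) verbatim, using only that $\mathrm{PSL}_2$ sits as an index-$2$ normal subgroup and that $\ell(B)=2$.

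There is, however, a muddle in your treatment of the local step. You propose to identify $\bar C=C_G(z)/O_{2'}(C_G(z))$ via Theorem~\ref{thm:semidihderal} and then apply ``the matching case of \cite[Theorem~1.1]{KL20a} with Lemma~\ref{lem:AlperinDade}''. Neither reference applies: $\bar C$ has \emph{semidihedral} Sylow $2$-subgroup $P$, not dihedral, so \cite[Theorem~1.1]{KL20a} is irrelevant; and Lemma~\ref{lem:AlperinDade} concerns extensions of odd index, which do not arise here. The paper's route is much simpler: since $C_N(z)$ is $2$-nilpotent by \cite[Lemma~(7A)]{Bra66} (the centraliser of an involution in $\mathrm{PSL}_2(q)$ is dihedral) and $|G:N|=2$, the centraliser $C=C_G(z)$ is itself $2$-nilpotent, whence $\bar C\cong P\cong\bar{C'}$ and the local Morita equivalence is immediate from \cite[Lemma~3.1]{KL20a}. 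You also need to record that all involutions of $G$ are $G$-conjugate (the paper cites \cite[Proposition~1.1(iii)]{ABG70}); without this, the gluing lemma \cite[Lemma~4.1]{KL20a} requires separate verification at every conjugacy class of involutions, not just at $z$.
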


\begin{proof} 
Set $B:=B_0(kG)$, $B':=B_0(kG')$
and $M:={\mathrm{Sc}}(G\times G', \Delta P)$.
By {\color{black} the definition of $G$ and $G'$ in Section~1},   
there are normal subgroups 
$N\vartriangleleft G$ and $N'\vartriangleleft G'$ with
$G$ has a normal subgroup $N$
$|G/N|=2$, $|G'/N'|=2$ and  $N\cong {\mathrm{PSL}}_2(p^{2f})$,   $N'\cong {\mathrm{PSL}}_2(p'^{2f'})$. 
Hence there is  $Q\in{\mathrm{Syl}}_2(N)\cap{\mathrm{Syl}}_2(N')$ such that  $Q\cong D_{2^{n-1}}$ (see \cite{ABG70}). 
Recall that $\mathcal F_P(G)= \mathcal F_P(G')$ 
by  \cite[Theorem~5.3]{CG12}.
\par
First, we claim that 
\begin{equation}\label{StabEquivPGL*}
M \text{ realizes a stable equivalence of Morita type between } B\text{ and }B'.
\end{equation}
Let $z$ be the unique involution in 
$Z(P)$. Set $C:=C_G(z)$ and $C':=C_{G'}(z)$.
We know that 
$z \in Q\in{\mathrm{Syl}}_2(N)\cap{\mathrm{Syl}}_2(N')$.
Now recall that $C_N(z)$ and $C_{N'}(z)$ are both
$2$-nilpotent by \cite[Lemma (7A)]{Bra66}. 
Hence, as $|G/N|=2=|G'/N'|$,
$C$ and $C'$ are also $2$-nilpotent. 
Set    
$\overline C:=C/O_{2'}(C)$, $\overline{C'}:=C'/O_{2'}(C')$, 
$\overline P:=[P\,O_{2'}(C)]/O_{2'}(C)
 \cong [P\,O_{2'}(C')]/O_{2'}(C')$.
Obviously,
$\overline{C} \cong \overline{C'}\cong \overline{P}\cong P$.
Hence, \cite[Lemma 3.1]{KL20a} implies that  ${\mathrm{Sc}}( C \times C',\Delta P)$ 
induces a Morita equivalence between $B_0(kC)$
and $B_0(kC')$ and  \cite[Lemma 3.2]{KL20a} yields
$${\mathrm{Sc}}( C \times C',\Delta P)\,\big|\, M(\Delta \langle z\rangle)$$
However, as $M$ is Brauer indecomposable by \cite{KT19}, we have  ${\mathrm{Sc}}( C \times C',\Delta P) = M(\Delta \langle z\rangle)$. 
Since, by \cite[Proposition 1.1(iii), p.10]{ABG70}, 
all involutions in $G$ are $G$-conjugate and all involutions in $G'$ are $G'$-conjugate, (\ref{StabEquivPGL*}) follows from  
\cite[Lemma 4.1]{KL20a} as we have already seen in the proof of Lemma~\ref{stableEquiv}.
\par
Second, in order to prove that the stable equivalence realized by $M$ is in fact a Morita equivalence, by \cite[Theorem 4.14.10]{Lin18} it suffices to prove that 
all simple $B$-modules are mapped to simple $B'$-modules ($\ast$).
However, to do this it is enough to note that in the statement of \cite[Proposition~5.4]{KL20a} the groups ${\mathrm{PGL}}_2({q})$ and ${\mathrm{PGL}}_2({q'})$ can be replaced with $G={\mathrm{PGL}}^*_2({p^{2f}})$ and $G'={\mathrm{PGL}}^*_2({p'^{2f'}})$ and the proof of \cite[Proposition~5.4]{KL20a} as well as the proof of the case $q\equiv 1\pmod{4}$ of \cite[Lemma~5.3]{KL20a}, on which the latter proposition relies, can both remain unchanged to yield ($\ast$). (This is because the arguments involved only rely on the facts that  ${\mathrm{PSL}}_2({q})$ is normal of index~2 in ${\mathrm{PGL}}_2({q})$ and  $\ell(B_0({\mathrm{PGL}}_2({q})))=2$, which is also true for $G$ and $G'$.)  
\end{proof}

\section{Proof of Theorem~\ref{MainResult}}

We can now prove Theorem~\ref{MainResult}.

\begin{proof}[{\bf Proof of Theorem~\ref{MainResult}}]
Part (b) is given by the case-by-case analysis of Section~\ref{sec:proofMainb}. Hence it remains to prove (a). 
\par 
To start with,  
$B_0(kG)=\mathrm{Sc}(G\times{\color{black}[G/O_{2'}(G)]},
\Delta P)$ 
(seen as a $(kG, k[G/O_{2'}(G)])$-bimodule) induces a splendid Morita equivalence between $B_0(kG)$ and $B_0(k[G/O_{2'}(G)])=:\bar{B}$, because $O_{2'}(G)$ acts trivially on the principal block.  Furthermore, if  $G'$ denotes one of the groups listed in Theorem~\ref{MainResult}(a) and there is a splendid Morita equivalence between $B_0(k[G/O_{2'}(G)])$ and $B_0(kG')$ realized by the Scott module 
$\mathrm{Sc}({\color{black}[G/O_{2'}(G)]}
\times G',\Delta P)$, then composing both equivalences, we obtain a splendid Morita equivalence between $B_0(kG)$ and $B_0(kG')$ realized by 
$$\mathrm{Sc}(G\times {\color{black}[G/O_{2'}}(G)], 
\Delta P)  \otimes_{\bar{B}} 
\mathrm{Sc}({\color{black}[G/O_{2'}(G)]} \times G',\Delta P) \cong  \mathrm{Sc}(G\times G',\Delta P) \,.$$
Therefore, we may assume that $O_{2'}(G)=1$ and so $G$ must be of type {\rm(x)}, where  {\rm(x)} denotes one  
of the seven families of groups {\rm (bb), (ba1), (ba2), (ab), (aa1), (aa2), (aa3)}  of \smallskip Theorem~\ref{thm:semidihderal}. 

\noindent\textbf{Claim 1:} $B:=B_0(kG)$ is splendidly Morita equivalent to the principal block $B':=B_0(kG')$ for some group $G'$ of type {\sf (bb), (ba1), (ba2), (ab), (aa1), (aa2)} listed in Theorem~\ref{MainResult}(a) with $P\in \Syl_2(G)\cap\Syl_2(G')$ and the splendid Morita equivalence is realized by \smallskip ${\mathrm{Sc}}(G\times G',\Delta P)$. \smallskip \\
Here we emphasize that the lists of groups in Theorem~\ref{thm:semidihderal} and in the statement of Theorem~\ref{MainResult}(a) are not the same, hence we use different fonts to distinguish them. 
We prove Claim 1 through a case-by-case analysis as follows.\smallskip \\
$\bullet$ {\sl Suppose that $G$  
is of type {\rm (bb)}.}
Then $G=P$ by Theorem \ref{thm:semidihderal}(bb). Then, we may take $G':=P$, that is $G'$ of type {\sf(bb)}. 
Obviously $B=B'=kP$ and ${\mathrm{Sc}}(P\times P,\Delta P)={_{kP}{}}kP_{kP}$ induces a splendid Morita equivalence
between $B$ and $B'$, as required. \smallskip \\
$\bullet$ {\sl Suppose that $G$ is of type {\rm(ba1)}}.
Then  
$G={\mathrm{SL}}^{\pm}_2(p^f)\rtimes C_d$ 
where $4(p^f+1)_2=2^n$ and $d$ is an odd divisor of $f$.
We take   
$G':={\mathrm{SL}}^{\pm}_2({p'}^{f'})$,  
that is of type {\sf(ba1)}, and we may assume that we have chosen $P$ such that $P\in \Syl_2(G)\cap\Syl_2(G')$. 
Then, by Frattini's argument  $G=N_G(P)G'=C_G(P)PG'=C_G(P)G'$ 
and it follows from Lemma~\ref{lem:AlperinDade}  
(i.e. \cite[Theorem~2.2(b)]{KL20a})  
that $$1_B kG 1_{B'} ={\mathrm{Sc}}(G\times G', \Delta P)$$
induces a splendid Morita equivalence between $B$ and $B'$. \smallskip \\ 
$\bullet$ {\sl Suppose that $G$  
is of type {\rm(ba2)}.} 
Then  
$G={\mathrm{SU}}^{\pm}_2(p^f)\rtimes C_d$ 
where $4(p^f-1)_2=2^n$ and $d$ is an odd divisor of $f$.
We take  
{\color{black}
$G':={\mathrm{SU}}^{\pm}_2({p'}^{f'})$, 
}
that is of type {\sf(ba2)}, and we may assume that we have chosen $P$ such that $P\in \Syl_2(G)\cap\Syl_2(G')$.
Then the same arguments as in case {\rm(ba1)} yield the claim. \smallskip \\ 
$\bullet$ {\sl Suppose that $G$  
is of type {\rm(ab)}.} 
Then 
$G={\mathrm{PGL_2^*}}(p^{2f})\rtimes C_d$
where $2(p^{2f}-1)_2=2^n$ and $d$ is an odd divisor of $f$.  
We take   
$G':={\mathrm{PGL_2^*}}({p'}^{2f'})$,
that is of type {\sf(ab)}, and we may assume that we have chosen $P$ such that $P\in \Syl_2(G)\cap\Syl_2(G')$. 
Then the same arguments as in case {\rm(ba1)} yield the claim. \smallskip\\
$\bullet$ {\sl Suppose that $G$ 
is of type {\rm(aa1)}.}
Then 
$G={\mathrm{PSL}}_3(p^f).H$ 
where $4(p^f+1)_2=2^n$ and $H \leq C_{(3, p^f-1)}\times C_d$
for an odd divisor $d$ of $f$. 
We take   
$G':={\mathrm{PSL}}_3({p'}^{f'})$, 
that is of type {\sf(aa1)}, and we may assume that we have chosen $P$ such that $P\in \Syl_2(G)\cap\Syl_2(G')$. 
Then the same arguments as in case {\rm(ba1)} yield the claim, where $C_d$ is replaced by $H$. \smallskip\\
$\bullet$ {\sl Suppose that $G$  
is of type {\rm(aa2)}.}
Then  
$G={\mathrm{PSU}}_3(p^f).H$ 
where $4(p^f-1)_2=2^n$ and $H \leq C_{(3, p^f+1)}\times C_d$
for an odd divisor $d$ of $f$.
We take  
$G':={\mathrm{PSU}}_3({p'}^{f'})$,   
that is of type {\sf(aa2)}, and we may assume that we have chosen $P$ such that $P\in \Syl_2(G)\cap\Syl_2(G')$.
Then the same arguments as in case {\rm(ba1)} yield the claim, where $C_d$ is replaced by $H$. \smallskip\\
$\bullet$ {\sl Suppose that $G$  
is of type {\rm(aa3)}.}
Then $G=M_{11}$ by Theorem~\ref{thm:semidihderal}(aa3) and $n=4$. We take $G':=\mathrm{PSL}_3(3)$, so that $P\in{\mathrm{Syl}}_2(G)\cap{\mathrm{Syl}}_2(G')$ and $G'$ is of type {\sf(aa1)}. 
Moreover, by Proposition~\ref{M11-L3(3)}, ${\mathrm{Sc}}(G\times G',\Delta P)$ induces a splendid Morita equivalence
between $B$ and $B'$, as required. \smallskip \\
Furthermore, the fact that the group $G'$ in Claim 1 is independent of the choice of $p$ and~$f$ for types {\sf (ba1), (ba2), (ab), (aa1), (aa2)} follows directly from Part (b). Hence it only remains to prove the following claim.\smallskip \\
\noindent\textbf{Claim 2.} The principal blocks of the groups listed 
in the different cases of Theorem~\ref{MainResult}(a)
are mutually not splendidly
 Morita \smallskip equivalent.\\
So let  $B:=B_0(kG)$ 
for $G$ of type {\sf(x)} with 
{\sf(x)}$\in\{${\sf(bb), (ba1), (ba2), (ab), (aa1), (aa2)}$\}$ as in Theorem~\ref{MainResult}(a). It is enough to show  that $B$ is not Morita equivalent to $B':=B_0(kG')$ for $G'$ of type 
{\sf(y)} $ \neq \, ${\sf(x)} and {\sf(y)}$\in\{${\sf(bb), (ba1), (ba2), (ab), (aa1), (aa2)}$\}$. 
\par
Now, type {\sf(bb)} is the unique case in which $\ell(B)=1$, so we can assume that $G$ is not of type {\sf(bb)}. Next, assume that $\ell(B)=3$. Then, by \cite[table on p. 231]{Ols75},
$G$ is of type {\sf(aa1)} or {\sf(aa2)}.
However, the principal blocks of groups of type {\sf(aa1)} and {\sf(aa2)} are never 
 Morita equivalent
because their $2$-decomposition matrices are different by \cite[SD$(2\mathcal B)_2$, p.299 and  SD$(2\mathcal A)_1$, p.298]{Erd90}.
Therefore, it only remains to consider the case $\ell(B)=2$. Then, by \cite[table on p. 231]{Ols75}, $G$ is of type {\sf(ab)} or {\sf(ba)} (i.e. {\sf(ba1)} or {\sf(ba2)}).  
Then, by looking at  
$k(B)$s (see \cite[the table on p.231]{Ols75}), we obtain that the principal blocks of groups of type {\sf(ab)} and {\sf(ba)} are never  Morita equivalent. 
Hence we can also assume that $G$ is not of type {\sf(ab)}, so that we may assume that $G$ is of type {\sf(ba1)} and $G'$ is of type {\sf(ba2)}, that is 
$$
\begin{matrix}
&
&
&G={\color{black}{\mathrm{SL}}^\pm_2(p^f) }
&\text{with}
&4(p^f+1)_2
&=2^n
\\
&
&
&\ \ G'={\color{black}{\mathrm{SU}}^\pm_2({p'}^{f'}) }
&\text{with} 
&\ 4({p'}^{f'}-1)_2
&=2^n
\end{matrix}
$$ 
and we may identify a Sylow $2$-subgroup $P$ of $G$ and $G'$.
Since $G$ has a central involution, say $z$,  
set $Z:=\langle z\rangle$, $\overline{G}:=G/Z{\color{black} \cong {\mathrm{PGL}}_2(p^f)}$,
$\overline{B}:= B_0(k \overline{G})$, $\overline{P}:=P/Z$ and note that $\overline{P}\cong D_{2^{n-1}}$.
We also have $Z\leq G'$, hence we can set 
$\overline{G'}:=G'/Z {\color{black} \cong {\mathrm{PGL}}_2(p'^{f'})}$ and
$\overline{B'}:= B_0(k\overline{G'})$. 
Then, it follows from the condition on ${p}^{f}$ and 
\cite[Corollary 8.1(f)]{KL20a}
(see also \cite[D(2$\mathcal B$), p.295]{Erd90})
and from the condition on ${p'}^{f'}$
and \cite[Corollary~8.1(e)]{KL20a}
(see \cite[D(2$\mathcal A$), p.294]{Erd90}),
respectively, that
$$C_{\overline{B} } =
\begin{pmatrix} 4& 2\\ 2& 2^{n-3}+1 \end{pmatrix}
\text{\quad and \quad}
C_{\overline{B'}} =
\begin{pmatrix} 2^{n-1}& 2^{n-2}\\ 2^{n-2}& 2^{n-3}+1 \end{pmatrix}.
$$
Now suppose that $B$ and $B'$ are 
Morita equivalent.
Then, $C_B=C_{B'}$. 
Since $Z$ is a central subgroup of $G$ and $G'$ of order $2$, \cite[Theorem 5.8.11]{NT89} 
implies that
$C_B=2C_{\overline{B}}$ and
$C_{B'}= 2C_{\overline{B'}}.$
Thus $C_{\overline{B}}=C_{\overline{B'}}$,
a contradiction. Claim 2 follows. 
\end{proof}

\bigskip

\noindent
{\bf Acknowledgment.}
{\small The authors are grateful to Burkhard K\"{u}lshammer for
useful conversations.



\begin{thebibliography}{CEKL11}


{\color{black}
\bibitem[Alp76]{Alp76}
J.L.~Alperin,
\emph{Isomorphic blocks}, J.~Algebra \textbf{43} (1976), 694--698.
}

\bibitem[ABG70]{ABG70}
{\sc J.L.~Alperin, R.~Brauer, D.~Gorenstein},
\emph{Finite groups with quasi-dihedral and wreathed Sylow $2$-subgroups},
Trans.~Amer.~Math.~Soc. \textbf{151} (1970), 1--261.

\bibitem[BC87]{BC87}
{\sc D.J.~Benson, J.F.~Carlson},
\emph{Diagrammatic methods for modular representations and 
\linebreak
cohomology},
Comm.~Algebra \textbf{15} (1987), 53--121.

{\color{black}
\bibitem[Bra66]{Bra66}
{\sc R.~Brauer},
\emph{Some applications of the theory of blocks of characters of finite groups III}, 
J.~Algebra~\textbf{3} (1966), 225--255.
}



\bibitem[Atlas]{Atlas}
{\sc J.H.~Conway, R.T.~Curtis, S.P.~Norton, R.A.~Parker, R.A.~Wilson}, 
\emph{Atlas of Finite Groups}, Clarendon Press, Oxford, 1985.



\bibitem[CEKL11]{CEKL11}
{\sc D.A.~Craven, C.W.~Eaton, R.~Kessar, M.~Linckelmann},
\emph{The structure of blocks with a Klein four defect group}, 
Math.~Z. \textbf{268} (2011), 441--476.

\bibitem[CG12]{CG12}
{\sc D.A.~Craven, A. Glesser},
\emph{Fusion systems on small $p$-groups},
Trans.~Amer.~Math.~Soc. \textbf{364} (2012), 5945--5967.

{\color{black}
\bibitem[Dad77]{Dad77}
E.C.~Dade, \emph{Remarks on isomorphic blocks},
J.~Algebra \textbf{45} (1977), 254--258. 
}

\bibitem[Erd79]{Erd79}
{\sc K.~Erdmann},
\emph{On $2$-blocks with semidihedral defect groups},
Trans.~Amer.~Math.~Soc. \textbf{256} (1979), 267--287.

\bibitem[Erd90]{Erd90} 
{\sc K. Erdmann},
\emph{Blocks of Tame Representation Type and Related Algebras}. 
Lecture Notes in Mathematics, \textbf{vol. 1428}, Springer-Verlag, Berlin, 1990.

\bibitem[GAP]{GAP}
{\sc The GAP Group},
{\sf GAP} --- Groups, Algorithms, and Programming,
Version 4.8.4, 
\url{http:// www.gap-system.org}, 2016.

\bibitem[Go69]{G}
D. Gorenstein, \textit{Finite groups the centralizers of whose involutions have normal 2-complements}, Canad.~ J. Math. \textbf{21} (1969), 335--357. 

\bibitem[HM76]{HM76}
{\sc W.~Hamernik, G.O.~Michler}, 
\emph{On vertices of simple modules in $p$-solvable groups}.
Mitt.~Math.~Sem.~Giessen {\bf 121} (1976), 147–162.

\bibitem[HK00]{HK00}
{\sc G. Hiss, R. Kessar},
\emph{Scopes reduction and Morita equivalence classes of blocks 
in finite classical groups}, 
J.~Algebra \textbf{230} (2000), 378--423.

\bibitem[HK05]{HK05}
{\sc G. Hiss, R. Kessar},
\emph{Scopes reduction and Morita equivalence classes of blocks 
in finite classical groups II},  
J.~Algebra \textbf{283} (2005), 522--563.

\bibitem[KL20a]{KL20a}
{\sc S. Koshitani, C. Lassueur},
\emph
{Splendid Morita equivalences for principal $2$-blocks with dihedral defect groups}, 
Math.~Z. \textbf{294} (2020), 639--666.

\bibitem[KL20b]{KL20b}
{\sc S. Koshitani, C. Lassueur},
\emph
{Splendid Morita equivalences for principal $2$-blocks with generalised quaternion defect groups}, 
J.~Algebra \textbf{558} (2020), 523--533.

\bibitem[KT19]{KT19}
{\sc S.~Koshitani, \.I.~Tuvay},
\emph{The Brauer indecomposability of Scott modules with semidihedral vertex},
preprint, \href{https://arxiv.org/pdf/1908.05536v2.pdf}{arXiv:1908.05536v2}

\bibitem[Lan83]{Lan83}
{\sc P.~Landrock},
\emph{Finite Group Algebras and their Modules},
London Math.~Soc.~Lecture Note Series, \textbf{vol.84}, 
Cambridge Univ.~Press, Cambridge, 1983.


\bibitem[Lin96]{Lin96b}
{\sc M. Linckelmann},
\emph{The isomorphism problem for cyclic blocks and their source algebras}, \linebreak
Invent.~Math. \textbf{125} (1996), 265--283.

\bibitem[Lin01]{Lin01}
{\sc M.~Linckelmann},
\emph{On splendid derived and stable equivalences between blocks of finite groups},
J.~Algebra \textbf{242} (2001), 819--843.

\bibitem[Lin18]{Lin18}
{\sc M.~Linckelmann},
\emph{The Block Theory of Finite Group Algebras, Volumes 1 and 2},
London Math.~Soc.~Student Texts \textbf{91} and \textbf{92}, 
Cambridge Univ.~Press, Cambridge, 2018, 

\bibitem[NT88]{NT89}  
{\sc H.~Nagao, Y. Tsushima}, 
\emph{Representations of Finite Groups},
Academic Press, New York, 1988.

\bibitem[Ols75]{Ols75}
{\sc J.B.~Olsson},
\emph{On $2$-blocks with quaternion and quasidihedral defect groups},
J.~Algebra \textbf{36} (1975), 212--241.

\bibitem[Pui94]{Pui94} 
{\sc L. Puig},
\emph{On Joanna Scopes' criterion of equivalence for blocks of symmetric groups},
Algebra Colloq. \textbf{1} (1994), 25--55.

\bibitem[Pui99]{Pui99}
{\sc L. Puig},
\emph{On the Local Structure of Morita and Rickard Equivalences 
between Brauer Blocks}, 
Birkh{\"a}user, Basel, 1999.

\bibitem[Sch83]{SchM12} 
{\sc G. J.A.~Schneider},
\emph{The vertices of the simple modules of $M_{12}$ over a field of characteristic~$2$},
J.~Algebra \textbf{83} (1983), 189--200.

\bibitem[Th\'e85]{The85} 
{\sc J.~Th{\'e}venaz},
\emph{Relative projective covers and almost split sequences},
Comm.~Algebra \textbf{13} (1985), 1535--1554.

\bibitem[Th\'e95]{The95} 
{\sc J.~Th\'{e}venaz},
\emph{$G$-Algebras and Modular Representation Theory}. 
Clarendon Press, Oxford, 1995.

\bibitem[ModAtl]{ModAtlas}
{\sc R.~Wilson, J.~Thackray, R.~Parker, F.~Noeske,
J.~M{\"u}ller, F.~L{\"u}beck, C.~Jansen, G.~Hiss, T.~Breuer},
\emph{The Modular Atlas Project},
\url{http://www.math.rwth-aachen.de/~MOC}.

\end{thebibliography}
\end{document}